\newtheorem{theorem}{Theorem}[section]
\newtheorem{lemma}[theorem]{Lemma}
\newtheorem{claim}{Claim}
\theoremstyle{definition}
\newtheorem{quest}[theorem]{Question}
\newtheorem{definition}[theorem]{Definition}
\newtheorem{remark}[theorem]{Remark}
\newtheorem*{claim*}{Claim}
\newtheorem{case}{Case}[theorem]
\newtheorem{subcase}{Subcase}[case]
\newenvironment{claimproof}[1][Proof of Claim]{\noindent \underline{#1.} }{\hfill$\diamondsuit$}
\theoremstyle{plain}
\newcommand{\bbC}{{\mathbbm{C}}}
\newcommand{\bbR}{{\mathbbm{R}}}
\newcommand{\bbZ}{{\mathbbm{Z}}}
\newcommand{\boH}{{\mathbf{H}}}
\newcommand{\boJ}{{\mathbf{J}}}
\newcommand{\boL}{{\mathbf{L}}}
\newcommand{\frg}{{\mathfrak{g}}}
\newcommand{\DSO}{\mathrm{DSO}}
\newcommand{\JAC}{\mathrm{JAC}}
\newcommand{\ODJM}{\mathrm{ODJM}}
\newcommand{\zeros}{{\mathbf{0}}}
\newcommand{\ones}{{\mathbf{1}}}
\newcommand{\tr}{{\mathrm{Tr}}}
\renewcommand{\Re}{{\mathrm{Re}}}
\newcommand{\idty}{{\mathbbm{1}}}
\newcommand{\cyclic}{{\mathrm{cyc}}}
\newcommand{\isotorus}{{\mathbf{T}}}
\newcommand{\spectrum}{{\mathrm{spec}}}
\newcommand{\energy}{{\mathcal{E}}}
\newcommand{\lcm}{{\mathrm{lcm}}}
\newcommand{\shifted}{{\clubsuit}}
\newcommand{\set}[1]{{\left\{ #1 \right\}}}
\numberwithin{equation}{section}
\title[Closed Gaps of 1D Discrete Operators]{On the Number of Closed Gaps of Discrete Periodic One-Dimensional Operators}
\author[A.\ Arroyo]{Andrew Arroyo}
\address{Department of Mathematics, Texas State University, San Marcos, TX 78666, USA}
\email{a\_a580@txstate.edu}
\author[F.\ Castro]{Faye Castro}
\address{Department of Mathematics, Texas State University, San Marcos, TX 78666, USA}
\email{veo21@txstate.edu}
\author[J.\ Fillman]{Jake Fillman}
\address{Department of Mathematics, Texas State University, San Marcos, TX 78666, USA}
\email{fillman@txstate.edu}
\thanks{Arroyo and Castro were supported in part by National Science Foundation Grant DMS 2213196. Fillman was supported in part by National Science Foundation Grant DMS 2213196 and Simons Foundation Collaboration Grant \# 711663.}
\begin{document}

\begin{abstract}
    From the general inverse theory of periodic Jacobi matrices, it is known that a periodic Jacobi matrix of minimal period $p \geq 2$ may have at most $p-2$ closed spectral gaps.
    We discuss the maximal number of closed gaps for one-dimensional periodic discrete Schr\"odinger operators of period $p$. 
    We prove nontrivial upper and lower bounds on this quantity for large $p$ and compute it exactly for $p \leq 6$. 
    Among our results, we show that a discrete Schr\"odinger operator of period four or five may have at most a single closed gap, and we characterize exactly which potentials may exhibit a closed gap.
For period six, we show that at most two gaps may close. 
    In all cases in which the maximal number of closed gaps is computed, it is seen to be strictly smaller than $p-2$, the bound guaranteed by the inverse theory.
    We also discuss similar results for purely off-diagonal Jacobi matrices.
\end{abstract}

\maketitle
\setcounter{tocdepth}{1}
\tableofcontents

\hypersetup{
	linkcolor={black!30!blue},
	citecolor={black!30!green},
	urlcolor={black!30!blue}
}

\section{Introduction}

\subsection{Setting}
Given $v \in \bbR^p$ and $a\in (0,\infty)^p=: \bbR_+^p$, the associated periodic Jacobi matrix $\boJ = \boJ_{a,v}:\ell^2(\bbZ) \to \ell^2(\bbZ)$ is defined by 
\begin{equation}
    [\boJ\psi](n)= A(n-1) \psi(n-1)+ V(n)\psi(n) + A(n)\psi(n+1),
\end{equation}
where $A,V:\bbZ \to \bbR$ are the $p$-periodic sequences satisfying $A(n) = a_n$ and $V(n)=v_n$ for $1\le n \le p$. In order to avoid trivialities that come from repetitions, we will always assume that $(a,v)$ is \emph{irreducible} in the sense that it has $p$ distinct cyclic shifts (which is equivalent to restricting attention to $A$ and $V$ such that $(A,V)$ has minimal period $p$). That is, defining $\cyclic:\bbR^p \to \bbR^p$ by $\cyclic(v) = (v_2,v_3,\ldots, v_p,v_1)$, we say that $(a,v) \in \bbR_+^p \times \bbR^p$ is \emph{irreducible} if
\[ (a,v), \ (\cyclic(a),\cyclic(v)), \ \ldots, \ (\cyclic^{p-1}(a), \cyclic^{p-1}(v)) \]
are pairwise distinct elements of $\bbR_+^p \times \bbR^p$, and we call it \emph{reducible} if it is not irreducible (with similar definitions of (ir)reducibility of $a$ and $v$).  

Periodic Jacobi matrices play an important role in spectral theory. On one hand, they arise quite naturally in the inverse spectral theory corresponding to finite-gap subsets of $\bbR$: for any finite-gap set $\Sigma \subseteq \bbR$ whose components have rational harmonic measure, there is a family (indeed, topologically a torus) of periodic Jacobi matrices with precisely that set as their common spectrum. See Appendix~\ref{sec:isotori} for a longer discussion and \cite{Simon2011Szego, Teschl2000Jacobi} for textbook discussions. From the perspective of direct spectral theory and mathematical physics, Jacobi matrices give one of the simplest models of a one-dimensional Hamiltonian with nearest-neighbor interactions. Indeed, the special case $a = \ones$, the vector of all ones, gives rise to \emph{discrete Schr\"odinger operators} (DSO), which have been extensively studied over the years; for background, we point the reader to the textbooks \cite{Bourgain2005greensfunction, CarmonaLacroix1990, CFKS, DF2022ESO, PasturFigotin1992}.

 It is known that the spectrum of $\boJ_{a,v}$ can be written as a union of nondegenerate closed intervals via the following procedure. For each $\theta \in \bbR$, define $J(\theta)=J_{a,v}(\theta)$ by
\begin{equation}
    J(\theta) = \begin{bmatrix}
        v_1 & a_1 &&& e^{-2\pi i\theta}a_p \\
        a_1 & v_2 & a_2 \\
        & \ddots & \ddots & \ddots\\
        && a_{p-2} & v_{p-1} & a_{p-1} \\
        e^{2\pi i\theta}a_p &&& a_{p-1} & v_p
    \end{bmatrix}.
\end{equation}
Let $\lambda_1(\theta) \leq \lambda_2(\theta) \leq \cdots \leq \lambda_p(\theta)$ denote the eigenvalues of $J(\theta)$ (counted with multiplicity). Defining
\[ \lambda_j^- = \min\{ \lambda_j(\theta): \theta \in [0,1] \}, \quad \lambda_j^+ = \max \{\lambda_j(\theta): \theta \in [0,1]\}, \]
we have the inequalities $\lambda_1^- < \lambda_1^+ \leq \lambda_2^- < \cdots \leq \lambda_p^- < \lambda_p^+$ and the following expression for the spectrum
\begin{equation}
    \spectrum(\boJ) = \bigcup_{j=1}^p [\lambda_j^-,\lambda_j^+].
\end{equation}
The intervals $[\lambda_j^-,\lambda_j^+]$ are called the \emph{bands} of the spectrum, and the intervals $(\lambda_j^+,\lambda_{j+1}^-)$ are called \emph{gaps}. One says that $\lambda \in \bbR$ is a \emph{closed gap} of $(a,v)$ if $\lambda = \lambda_j^+ = \lambda_{j+1}^-$ for some $1 \le j \le p-1$.

The famous  Borg--Hoschstadt theorem \cite{Borg1946Acta, Hochstadt1975LAA, Hochstadt1984LAA} asserts that if all spectral gaps collapse (i.e., $\lambda_j^+ = \lambda_{j+1}^-$ for all $1 \le j \le p-1$), then the diagonals and off-diagonals are constant; this immediately tells one that the maximal number of closed gaps for a Jacobi matrix of minimal period $p$ is at most $p-2$. In fact, within the class of all Jacobi matrices with minimal period $p$, the maximal number of closed gaps is precisely $p-2$, which can easily be deduced from the inverse theory (see Appendix~\ref{sec:isotori}). We are interested in what restrictions are placed on the structure of the spectrum if one restricts attention to the class of DSO.

 \begin{quest}
 How many \emph{closed} gaps can an irreducible $(a,v) \in \bbR_+^p \times \bbR^p$ have if it belongs to the class of discrete Schr\"odinger operators? 
 \end{quest}

 This work is related to and inspired by VandenBoom's work \cite{Vandenboom2018CMP}. More specifically, VandenBoom investigates broadly what sets whose components have rational harmonic measure can be spectra of discrete Schr\"odinger operators, and the question we are after here is of a related nature: how many connected components can the spectrum of a period-$p$ DSO have? We also discuss the related case of off-diagonal Jacobi matrices (ODJM), which correspond to $v=\zeros$.

 \subsection{Results}
To formulate results, let $\frg(a,v)$ denote the number of closed gaps\footnote{We will generally refer to closed gaps of the \emph{coefficient vector} $(a,v)$ rather than of the \emph{operator} $\boJ_{a,v}$, since we sometimes want to consider an operator as having period $kp$ where $p$ is its minimal period.} of $(a,v)$. When restricting to the class of discrete Schr\"odinger operators, we write $\boH_v = \boJ_{\ones,v}$ and $\frg_\DSO(v) := \frg(\ones,v)$. Similarly, we write $\boL_a = \boJ_{a,\zeros}$ and $\frg_\ODJM(a) = \frg(a,\zeros)$. We are then interested in the quantities:
\begin{align*}
    \frg_{\JAC}(p) &= \max\{\frg(a,v) : (a,v) \in \bbR_+^p \times \bbR^p \text{ is irreducible}\} \\ 
    \frg_{\DSO}(p) & = \max\{\frg_\DSO(v) : v \in \bbR^p \text{ is irreducible}\} \\ 
    \frg_{\ODJM}(p) & = \max\{\frg_\ODJM(a) : a \in \bbR_+^p  \text{ is irreducible}\}. 
\end{align*}
From the definitions, we see that $\frg(1)=0$ trivially in every case, so we focus on $p \geq 2$.  It is well known and not hard to show that $\frg_\DSO(v)=0$  for generic\footnote{In this case, a dense open set.} $v \in \bbR^p$ and similar reasoning shows $\frg_\ODJM(a)=0$ for generic $a \in \bbR_+^p$; compare \cite[Claim~3.4]{Avila2009CMP} and \cite[Lemma~2.1]{DamFilWan2023MANA}. 

 One can readily show that
\begin{equation} \label{eq:gJAC}
    \frg_{\JAC}(p) = p-2, \quad p \geq 2.
\end{equation}
Indeed, this follows imediately from the general inverse spectral theory of periodic Jacobi matrices.
For the reader's convenience, we give the proof of \eqref{eq:gJAC} in Appendix~\ref{sec:isotori}.

From \eqref{eq:gJAC}, we immediately note the upper bounds
\begin{equation} \label{eq:gDSOUB}
    \frg_{\DSO}(p), \ \frg_{\ODJM}(p) \leq  p-2, \quad p \geq 2.
\end{equation}
One naturally wonders how $\frg_\DSO$, $\frg_\ODJM$, and $\frg$ compare with one another aside from this bound. In particular, one naturally may wonder whether \eqref{eq:gDSOUB} is sharp. We will show that in general \eqref{eq:gDSOUB} is \emph{not sharp}.

\begin{theorem} \label{t:pgeq4}
 Let $p \geq 7$ be given.
 \begin{enumerate}[label={\rm (\alph*)}, itemsep=1ex]
 \item \label{pgeq4dso}  $\frg_\DSO(p) \geq 1$
 \item \label{pgeq4dsoub}  $\frg_\DSO(p) \leq p-3$ if $p \not\equiv 2 \ \mathrm{mod} \ 4$.
 \item \label{pgeq4odjm}  $\frg_\ODJM(p) \geq 1$ if $p$ is even and $\frg_\ODJM(p) \geq 2$ if $p$ is odd
 \item \label{pgeq4odjmub} $\frg_\ODJM(p) \leq p-3$.
 \end{enumerate}
\end{theorem}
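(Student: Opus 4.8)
The plan rests on the Floquet/discriminant picture. Let $\Delta(\lambda)$ be the discriminant (the trace of the transfer matrix over one period): it is a real polynomial of degree $p$, the spectrum is $\{\lambda:\Delta(\lambda)\in[-2,2]\}$, one has $\Delta(\lambda)-2=\det(\lambda-J(0))$ and $\Delta(\lambda)+2=\det(\lambda-J(1/2))$, in the off‑diagonal case ($v=\zeros$) moreover $\Delta(-\lambda)=(-1)^p\Delta(\lambda)$, and in the Schr\"odinger case ($a=\ones$) $\Delta$ is monic. A point is a closed gap exactly when it is a double root of $\Delta^2-4$ in the interior of the spectrum; since the bands are nondegenerate, this is the same as saying that \emph{any} eigenvalue of $J(0)$ or $J(1/2)$ of multiplicity $\geq 2$, other than $\min\spectrum$ or $\max\spectrum$, automatically sits at a closed gap. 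Also standard is that the $2p$ band edges split into ``periodic'' ($\Delta=2$) and ``antiperiodic'' ($\Delta=-2$) edges according to a pattern that is cyclic of period $4$ in the edge index; this $\bmod 4$ bookkeeping, together with the fact that $\frg=p-2$ is equivalent to the spectrum being a union of exactly two intervals, drives everything below.

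\emph{Lower bounds \ref{pgeq4dso} and \ref{pgeq4odjm}.} The plan is to force one crossing of eigenvalue branches: either exhibit explicit coefficient vectors, or impose the reflection symmetry on the coefficient vector ($v_n=v_{p+1-n}$, resp.\ $a_n=a_{p-n}$), which fixes the wrap‑around edge and hence commutes with $J(0)$ and $J(1/2)$, splitting each of them into two symmetry sectors. Along a one‑parameter family of such (generically irreducible) coefficient vectors, estimate the spectrum of the relevant Floquet matrix at the two endpoints (e.g.\ via a large‑coupling limit in which the ground state localizes at an extremal site) so as to arrange that the lowest eigenvalue lies in different sectors at the two ends; by continuity the bottom eigenvalues of the two sectors must coincide for some intermediate parameter, yielding a repeated eigenvalue, and choosing $J(0)$ or $J(1/2)$ according to the parity of $p$ keeps it off $\min\spectrum$, hence at a closed gap. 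For \ref{pgeq4odjm} with $p$ odd one gains a factor of two for free: $\Delta(-\lambda)=(-1)^p\Delta(\lambda)=-\Delta(\lambda)$, so $\Delta(0)=0\neq\pm2$ and closed gaps occur in $\pm$ pairs; thus $\frg_\ODJM(a)$ is even and $\geq1$ upgrades to $\geq2$.

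\emph{Upper bound for off‑diagonal matrices \ref{pgeq4odjmub}.} For $p$ odd this is now immediate: $\frg_\ODJM(a)$ is even while $p-2$ is odd. For $p$ even, suppose $\frg_\ODJM(a)=p-2$; then the spectrum is two intervals and there is a unique open gap, and since closed gaps occur in $\pm$ pairs (and $p-2$ is even) no closed gap can sit at $0$, so $0$ lies in the open gap, which is therefore the central $0$‑symmetric one. Hence $\spectrum(\boL_a)=(-I)\cup I$ for an interval $I\subseteq(0,\infty)$, and $\spectrum(\boL_a^2)=\{t^2:t\in I\}$ is a single interval. Writing $\boL_a=\begin{pmatrix}0&T^*\\ T&0\end{pmatrix}$ in the even/odd sublattice splitting of $\ell^2(\bbZ)$ gives $\boL_a^2=T^*T\oplus TT^*$; since $0\notin\spectrum(\boL_a^2)$, equality of the nonzero spectra of $T^*T$ and $TT^*$ forces both summands to have the same, single‑interval, spectrum. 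Each summand is a periodic Jacobi matrix of period $p/2$, so by Borg--Hochstadt it is a multiple of the identity plus a constant off‑diagonal; reading off its coefficients (on the even sublattice $A(2n)A(2n+1)$ is constant in $n$, and $A(2n-1)A(2n)$ on the odd one) and using periodicity of $A$ forces $A$ to have period $2$, hence $a$ to be reducible — a contradiction. (This argument in fact gives \ref{pgeq4odjmub} for all even $p\geq4$.)

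\emph{Upper bound for Schr\"odinger operators \ref{pgeq4dsoub}, and the main obstacle.} Suppose $\frg_\DSO(v)=p-2$. Sorting band edges by type and using that exactly one gap is open shows that $\Delta-2$ and $\Delta+2$ are each a polynomial of degree $\leq3$ (carrying the simple roots) times a perfect square, the two low‑degree factors having total degree $4$; when $p\not\equiv2\bmod4$ the $\bmod 4$ pattern of edge types forces, in the resulting configuration, that one of $\Delta\mp2$ is itself an even‑degree perfect square $g(\lambda)^2$ with $g$ again of the two‑interval type. One then wants to deduce that this hidden ``doubling'' of the discriminant is realized by an actual doubling of the operator — that $\boH_v$ coincides with a period‑$(p/2)$ discrete Schr\"odinger operator repeated twice and is hence reducible, contradicting irreducibility. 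Converting ``$\Delta\pm2$ has a perfect‑square factor'' into a genuine near‑periodicity of $v$ is the crux and the main obstacle: bare multiplicity counting does not suffice (every real symmetric matrix with all eigenvalue multiplicities even is trivially a direct sum of two copies of something), so one must exploit the rigidity of the Schr\"odinger class ($a=\ones$, not merely $\prod a_j=1$) — e.g.\ by pinning down which structured matrices $J(0),J(1/2)$ can have an all‑even multiplicity spectrum, or by controlling the iso‑discriminant set inside the Schr\"odinger class. This is exactly the step that collapses when $p\equiv2\bmod4$ (then $p/2$ is odd and the ``halving'' cannot be iterated), which is why that congruence class is excluded here; the residual bound $\frg_\DSO(p)\leq p-2$ is of course already \eqref{eq:gDSOUB}, and the content of \ref{pgeq4dsoub} is the improvement by one.
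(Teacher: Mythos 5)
The decisive gap is part \ref{pgeq4dsoub}: you do not prove it, and the partial setup you give is faulty. You yourself flag the key implication --- that a perfect-square factor of $\Delta\mp2$ is ``realized by an actual doubling of the operator'' --- as ``the crux and the main obstacle'' and leave it open, so at best this is a research plan. Moreover, the step just before it is wrong: for odd $p$ (which is part of the case $p\not\equiv2\bmod 4$) both $\Delta-2$ and $\Delta+2$ have odd degree $p$, so neither can be an even-degree perfect square; and for $4\mid p$ the mod-$4$ edge pattern does \emph{not} force the square --- for even $p$ the $j$th gap is antiperiodic exactly when $j$ is odd, independently of $p\bmod4$, so if the unique open gap happens to be antiperiodic the four simple roots split $2+2$ between $\Delta-2$ and $\Delta+2$ and no perfect square appears; that configuration is never addressed. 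The paper's proof of \ref{pgeq4dsoub} is entirely different and elementary: writing $\Phi_v(E)=\pm\idty$ as $\Phi_{v^+}(E)\mp[\Phi_{v^-}(E)]^{-1}=0$ for the two halves $v^\pm$ of $v$, the $(2,1)$ entry is a polynomial in $E$ of degree at most $\tfrac{p}{2}-2$ (antiperiodic case, $p$ even), respectively a monic polynomial of degree $\tfrac{p-1}{2}$ (odd $p$), which vanishes at every closed gap of the given type; too many such closed gaps force the polynomial to vanish identically (even case) or pin it down by its roots (odd case), and comparing leading/subleading coefficients across all cyclic shifts of $v$ produces linear relations ($v_j=v_{j+p/2}$ when $4\mid p$; $v_j=v_{j+m-1}$ with $m-1$ coprime to $p$ when $p$ is odd) that contradict irreducibility. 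The arithmetic restriction enters exactly there, not through any discriminant factorization.

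The lower bounds \ref{pgeq4dso} and \ref{pgeq4odjm} are likewise only sketched: you neither exhibit explicit coefficient vectors nor carry out the symmetry-sector/intermediate-value deformation (no endpoint eigenvalue estimates, and no argument that the specific parameter at which the two sector ground states cross yields an \emph{irreducible} vector --- generic irreducibility of the family says nothing about that one point). The paper settles these in a few lines with explicit examples built from $[M(0)]^2=[M(1)]^3=-\idty$, e.g.\ $v=(1,1,1,0,\dots,0)$ for odd $p$, and analogous explicit $a$'s (using $B(1,1)^3=B(\tfrac{1}{\sqrt2},1)^4=-\idty$, and $\Psi_a(0)=\pm\idty$ when $a_1a_3\cdots a_{p-1}=a_2a_4\cdots a_p$) for \ref{pgeq4odjm}; your remark that the $\pm E$ pairing upgrades $\geq1$ to $\geq2$ for odd $p$ is correct but only applies once $\geq1$ is actually established. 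By contrast, part \ref{pgeq4odjmub} of your proposal is complete and correct: the odd-$p$ parity argument coincides with the paper's, and for even $p$ your route (zero must lie in the unique open gap; $\boL_a^2$ splits over the even/odd sublattices into period-$p/2$ Jacobi matrices with one-interval spectrum; Borg--Hochstadt then makes $A(n)A(n+1)$ constant, so $a$ is $2$-periodic and reducible) is a genuinely different and valid argument, whereas the paper notes that the two spectral components each carry equilibrium measure $1/2$ and invokes the isospectral-torus theorem to force period $2$.
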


\begin{remark}\mbox{}
 \begin{enumerate}[label={\rm (\alph*)}, itemsep=1ex]
\item We expect that the upper bounds from Theorem~\ref{t:pgeq4} are not sharp in general. We will see in the next theorem that the lower bounds can be improved inductively.
\item We also expect that the arithmetic assumption in part~\ref{pgeq4dsoub} is an artifact of the proof. It would be interesting to find an alternative approach in this case.
\item The theorem is only formulated for $p \geq 7$, since we compute the quantities in question exactly for $p \leq 6$ and give some additional information below. For instance, we will see that the lower bound from part~\ref{pgeq4dso} holds for all $p \geq 4$ and the upper bound from part~\ref{pgeq4dsoub} holds for all $p \geq 3$.
\end{enumerate}
\end{remark}
For larger periods, one can use an inductive construction to prove larger lower bounds:

\begin{theorem} \label{t:multiples}
    For all $p,k \geq 2$, and $\bullet \in \{\DSO,\ODJM\}$, 
    \begin{equation}
        \frg_\bullet(2kp) \geq \frg_\bullet(p)+(k-1)p.
    \end{equation}
\end{theorem}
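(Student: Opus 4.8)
The plan is an explicit construction. We take a period‑$p$ coefficient vector realizing $\frg_\bullet(p)$, repeat it $k$ times, and then break the periodicity by appending $k$ copies of an \emph{isospectral} companion obtained from it by a cyclic shift. We describe the discrete Schr\"odinger case; the off‑diagonal case is word‑for‑word the same, with the potential $v$ replaced by a positive off‑diagonal $a$. Fix an irreducible $v^\star\in\bbR^p$ realizing $\frg_\DSO(p)=:g$, write $\Delta_v$ for the period‑$p$ discriminant of $\boH_v$ (so $\spectrum(\boH_v)=\{z:|\Delta_v(z)|\le 2\}$ and the band edges are the points with $|\Delta_v|=2$), and let $u^\star:=\cyclic(v^\star)$ (the reversal $(v^\star)^{\reverse}$ works equally well). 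Conjugating $\boH_{v^\star}$ by the unit translation on $\ell^2(\bbZ)$ turns it into $\boH_{u^\star}$, so $\Delta_{u^\star}=\Delta_{v^\star}=:\Delta$; and since $v^\star$ is irreducible, $u^\star\neq v^\star$. Set
\[
    w \ := \ \bigl(\,\underbrace{v^\star,\dots,v^\star}_{k},\ \underbrace{u^\star,\dots,u^\star}_{k}\,\bigr)\ \in\ \bbR^{2kp}.
\]

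\noindent\textbf{Irreducibility of $w$.} Any period of $w$ divides $2kp$. Because $v^\star$ and $u^\star$ are $p$‑periodic, the inequality $w_n\neq w_{n+p}$ can only hold for $n$ in the last $v^\star$‑block or the last $u^\star$‑block — two windows of length $\le p$ lying $kp$ apart — and $v^\star\neq u^\star$ makes this break‑set nonempty. A shift preserving a set confined to two such far‑apart small windows must either fix each window (impossible for a shift strictly between $0$ and $2kp$) or swap the two windows, which forces the shift to be $kp$; but period $kp$ would mean $v^\star=u^\star$. Hence $w$ is irreducible of period $2kp$. This is the only place $k\ge 2$ is used. (In the off‑diagonal case cyclic shifts keep $a\in\bbR_+^p$ and again preserve the discriminant, so everything goes through verbatim.)

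\noindent\textbf{Counting closed gaps of $\boH_w$.} The period‑$2kp$ monodromy factors as $M_w(z)=M_{u^\star}(z)^k\,M_{v^\star}(z)^k$, and $M_{v^\star}(z),M_{u^\star}(z)$ have the common trace $\Delta(z)$, hence the same eigenvalues. I would show that the following $g+(k-1)p$ points are closed gaps of $\boH_w$. First, the $g$ closed gaps $z_0$ of $v^\star$: there $\Delta(z_0)=\pm2$ is a double root of $\Delta\mp2$, so $M_{v^\star}(z_0)=M_{u^\star}(z_0)=\pm I$ (a closed gap of a spectrum is exactly a point where the monodromy is $\pm I$). Second, for each $m=1,\dots,k-1$, the $p$ interior spectral points $z_1$ with $\Delta(z_1)=2\cos(m\pi/k)$: there are exactly $p$ of these because $\Delta(\cdot)=c$ has $p$ simple real solutions, all interior to $\spectrum(\boH_{v^\star})$, for every $c\in(-2,2)$; at such a point $M_{v^\star}(z_1)$ and $M_{u^\star}(z_1)$ are elliptic and conjugate to a rotation by $m\pi/k$, so their $k$‑th powers both equal $(-1)^mI$. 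In all cases $M_w(z_\star)=I$, so $\tr M_w(z_\star)=2$, and moreover $\tfrac{d}{dz}\tr M_w$ vanishes at $z_\star$: using the identity $\tr\!\bigl(M_{v^\star}(z)^k\bigr)=2T_k(\Delta(z)/2)$ (and likewise for $u^\star$), where $T_k$ is the $k$‑th Chebyshev polynomial of the first kind, one gets $\tfrac{d}{dz}\tr M_w(z_\star)=2\sigma\,T_k'(\Delta(z_\star)/2)\,\Delta'(z_\star)$ for a sign $\sigma$, and this is $0$ because $\Delta'(z_0)=0$ at a closed gap of $v^\star$ and $T_k'(\cos(m\pi/k))=0$ at the points $z_1$. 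Now I invoke the structural fact that for the discriminant $\Delta_w$ of a periodic Jacobi matrix, the critical values lie outside $(-2,2)$ and every band edge bordering an open gap is a simple zero of $\Delta_w^2-4$; consequently a point where $\Delta_w=\pm2$ and $\Delta_w'=0$ is necessarily a closed gap. The $g+(k-1)p$ points above are pairwise distinct (the $z_0$'s have $|\Delta|=2$, the $z_1$'s have distinct values $2\cos(m\pi/k)\in(-2,2)$), so $\frg_\DSO(w)\ge g+(k-1)p$. As $w$ is irreducible of period $2kp$, this gives $\frg_\DSO(2kp)\ge\frg_\DSO(p)+(k-1)p$, and the identical argument yields the off‑diagonal statement.

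\noindent\textbf{Main obstacle.} The construction is short and the algebra is light; the point that needs genuine care is the last step — verifying that the exhibited points are \emph{closed} gaps rather than (possibly degenerate) open band edges. This is precisely where the structural properties of periodic discriminants are used: $\Delta_w$ has no critical value in $(-2,2)$, its band edges adjacent to open gaps are regular points, and its extreme spectral edges are non‑critical (which is why a critical point of $\Delta_w$ at value $\pm2$ must sit in the interior of the spectrum). A secondary, purely combinatorial point is the irreducibility of $w$, which does genuinely rely on $k\ge 2$; the isospectrality input $\Delta_{u^\star}=\Delta_{v^\star}$ is immediate once $u^\star$ is a cyclic shift (or reversal) of $v^\star$.
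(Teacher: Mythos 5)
Your construction and counting are exactly those of the paper's proof: form $w=v^k(\cyclic(v))^k$ from an optimal irreducible $v$, keep its original closed gaps, and close $p$ new gaps at each of the $k-1$ energies where the discriminant equals $2\cos(m\pi/k)$ because the $k$-th powers of the (conjugate) monodromies become $\pm\idty$ there; so the proposal is correct and essentially identical to the published argument (indeed you give more detail on the irreducibility of $w$, which the paper merely asserts ``by construction''). The only difference is cosmetic: since the preliminaries already record that $E$ is a closed gap precisely when the monodromy is $\pm\idty$, your final Chebyshev-derivative and discriminant-structure verification is redundant --- $\Phi_w(z_\star)=\idty$ alone suffices.
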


\begin{remark}\mbox{}
\begin{enumerate}[label={\rm (\alph*)}, itemsep=1ex]
\item     It is not hard to see from the proof that one in fact has 
    \begin{equation}
        \frg_\bullet(mkp) \geq \frg_\bullet(p)+(k-1)p
    \end{equation}
    for all $m,p,k \geq 2$, and $\bullet \in \{\DSO,\ODJM\}$.
    \item This bound is also in general not sharp. For instance, taking $k=p=2$ gives the lower bound
    \[ \frg_\DSO(8) \geq \frg_\DSO(2) + (2-1)2 = 2. \]
    However, the reader can directly check that for any $\lambda\neq 0$, $v = (0,0,0,\lambda, 0,0,0,-\lambda)$ has closed gaps at $E = -\sqrt{2},0,+\sqrt{2}$, and hence
    \begin{equation}
        \frg_\DSO(8) \geq 3.
    \end{equation}
\end{enumerate}
\end{remark}

To supplement the abstract results, we also give exact computations for periods $p \leq 6$. The computations and their proofs suggest that the behavior of $\frg_\bullet$ may be subtle in general.

\begin{theorem} \label{t:123}
We have:
\begin{enumerate}[label={\rm (\alph*)}, itemsep=1ex]
    \item \label{p2dso} $\frg_\DSO(2)  = 0$,
    \item \label{p3dso} $\frg_\DSO(3)  = 0$,
    \item \label{p2odjm} $\frg_\ODJM(2) =  0$,
    \item \label{p3odjm} $\frg_\ODJM(3) = 0$.
\end{enumerate}
Equivalently, $\frg_\DSO(v) = 0$ for any irreducible $v$ in $\bbR^2$ or $\bbR^3$ and $\frg_\ODJM(a) = 0$ for any irreducible $a \in \bbR_+^2$ or $\bbR_+^3$.
\end{theorem}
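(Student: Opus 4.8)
The plan is to reduce all four statements to a single assertion about the matrices $J(0)$ and $J(\tfrac12)$: the coefficient vector $(a,v)$ has a closed gap if and only if one of $J(0), J(\tfrac12)$ has a repeated eigenvalue. This is standard Floquet theory (see e.g.\ \cite{Simon2011Szego, Teschl2000Jacobi}, and cf.\ Appendix~\ref{sec:isotori}): each band edge $\lambda_j^\pm$ is attained at some $\theta \in \{0,\tfrac12\}$, the spectra of $J(0)$ and $J(\tfrac12)$ are disjoint, and hence a closed gap $\lambda = \lambda_j^+ = \lambda_{j+1}^-$ forces $\lambda_j(\theta_0) = \lambda_{j+1}(\theta_0) = \lambda$ at a single common $\theta_0 \in \{0,\tfrac12\}$, i.e.\ $\lambda$ is an eigenvalue of $J(\theta_0)$ of multiplicity at least two. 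So it suffices to prove: if $p \in \{2,3\}$ and $(a,v)$ is either a discrete Schr\"odinger operator or an off-diagonal Jacobi matrix, then a repeated eigenvalue of $J(0)$ or of $J(\tfrac12)$ forces $(a,v)$ to be reducible.

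For $p = 2$ this is immediate, since a $2 \times 2$ real symmetric matrix has a repeated eigenvalue only if it is a scalar matrix $\lambda I$. For DSO, $J(0) = \begin{bmatrix} v_1 & 2 \\ 2 & v_2 \end{bmatrix}$ is never scalar, while $J(\tfrac12) = \begin{bmatrix} v_1 & 0 \\ 0 & v_2 \end{bmatrix}$ is scalar iff $v_1 = v_2$; for ODJM, $J(0) = \begin{bmatrix} 0 & a_1 + a_2 \\ a_1 + a_2 & 0 \end{bmatrix}$ is never scalar (as $a_1 + a_2 > 0$), while $J(\tfrac12) = \begin{bmatrix} 0 & a_1 - a_2 \\ a_1 - a_2 & 0 \end{bmatrix}$ is scalar iff $a_1 = a_2$. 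In either case a repeated eigenvalue forces the coefficient vector to be reducible, giving parts~\ref{p2dso} and~\ref{p2odjm}.

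For $p = 3$ I would use the fact that a $3 \times 3$ real symmetric matrix $M$ has a repeated eigenvalue if and only if $M = \lambda I + \mu u u^{\top}$ for some $\lambda, \mu \in \bbR$ and $u \in \bbR^3$ (take $u$ spanning the orthogonal complement of the two-dimensional eigenspace, or $\mu = 0$ if the eigenvalue is triple). In the DSO case, $J(0)$ has all off-diagonal entries equal to $1$, and conjugating $J(\tfrac12)$ by the unitary $\mathrm{diag}(1,-1,1)$ yields a matrix with the same diagonal $(v_1,v_2,v_3)$ and all off-diagonal entries equal to $-1$; for both of these matrices the off-diagonal entries are nonzero, so $\mu \neq 0$ and $u$ has no zero coordinate, whence $\mu u_1 u_2 = \mu u_1 u_3 = \mu u_2 u_3$ gives $u_1 = u_2 = u_3$, and then equality of the diagonal entries $\lambda + \mu u_i^2$ forces $v_1 = v_2 = v_3$. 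In the ODJM case, $J(0) = \begin{bmatrix} 0 & a_1 & a_3 \\ a_1 & 0 & a_2 \\ a_3 & a_2 & 0 \end{bmatrix}$ has zero diagonal, which forces $u_1^2 = u_2^2 = u_3^2$ (and $\mu \neq 0$); then the off-diagonal entries $\mu u_i u_j$ all have the common absolute value $|\mu|\, u_1^2$, and since they equal the positive numbers $a_1, a_2, a_3$, we obtain $a_1 = a_2 = a_3$. Conjugating $J(\tfrac12)$ by $\mathrm{diag}(1,-1,1)$ turns it into the matrix with zero diagonal and off-diagonal entries $-a_1, -a_2, -a_3$, to which the same argument applies verbatim. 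Thus in every case a degenerate eigenvalue of $J(0)$ or $J(\tfrac12)$ forces reducibility, giving parts~\ref{p3dso} and~\ref{p3odjm}.

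The genuine content is the reduction step: one must carefully use both that the band edges occur only at $\theta \in \{0,\tfrac12\}$ and that the periodic and antiperiodic spectra are disjoint, so that a closed gap manifests as a multiplicity inside a single one of $J(0), J(\tfrac12)$ and not as a coincidence between an eigenvalue of one and an eigenvalue of the other. Granting that, the $p \le 3$ computations are short linear algebra. (For $p=3$ DSO one could alternatively expand the discriminant $\Delta(E) = \prod_j (E - v_j) - \sum_j (E - v_j)$ and check directly that $\Delta(E)^2 - 4$ has $2p$ simple roots unless $v$ is constant, but this passes through a less transparent cubic-discriminant estimate and is subsumed by the argument above.)
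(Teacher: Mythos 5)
Your proposal is correct, and it takes a genuinely different route from the paper. The paper works entirely with the monodromy matrix: it invokes the criterion that $E$ is a closed gap iff $\Phi_{a,v}(E)=\pm\idty$, splits this identity as $\Phi_{v_1\cdots v_m}(E) = \pm[\Phi_{v_{m+1}\cdots v_p}(E)]^{-1}$, and reads off entrywise polynomial relations in $(a,v)$ that force reducibility. You instead convert a closed gap into a repeated eigenvalue of a single Floquet matrix $J(0)$ or $J(\tfrac12)$ (your reduction via band edges at $\theta\in\{0,\tfrac12\}$ and disjointness of the periodic and antiperiodic spectra is sound, and is of course equivalent to the $\Phi=\pm\idty$ criterion the paper states), and then use the characterization of a degenerate $2\times 2$ or $3\times 3$ real symmetric matrix as $\lambda I + \mu uu^{\top}$; your entrywise checks for the DSO and ODJM structures (all off-diagonals equal and nonzero, resp.\ zero diagonal with positive off-diagonals, after the $\mathrm{diag}(1,-1,1)$ conjugation at $\theta=\tfrac12$) correctly force $v$ or $a$ to be constant. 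Each approach has its merits: yours is softer and arguably more conceptual for $p\le 3$, avoiding transfer-matrix algebra entirely; the paper's explicit $2\times 2$ computations such as \eqref{eq:p2Phiv} and \eqref{eq:p2Psia} are deliberately set up because they are reused verbatim in the proofs for $p=4,5,6$, whereas the rank-one-plus-scalar characterization does not extend cleanly beyond a single repeated eigenvalue in small dimensions. One minor point of care: for $p=2$ the Floquet matrix entry is $a_1+e^{-2\pi i\theta}a_2$ (the corner and off-diagonal terms coincide), which is what you used, but it is worth saying explicitly since the paper's displayed formula is written for $p\ge 3$.
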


\begin{remark}
    Let us point out that $\frg(a,v)=0$ for irreducible $(a,v)$ of period $2$ is well-known and follows immediately from the general theory (compare~\eqref{eq:gJAC}). It is included for completeness and because the calculations used to prove this directly furnish a basis for more elaborate computations later. We found the result for $p=3$ already to be intriguing, since $0 < \frg_\JAC(3) =1$, so the maximal number of closed gaps is strictly less than the theoretical upper limit provided by \eqref{eq:gJAC}.
\end{remark}

The next result shows $\frg_\bullet(4)=1$ for $\bullet \in \{\DSO,\ODJM\}$ and explicitly characterizes the coefficients that saturate the maximum. Again, we found this striking, since $4-2=2>1$, so the maximal number of gaps is again strictly smaller than $\frg_\JAC(4)=2$ in the restricted classes.
\begin{theorem} \label{t:4}
We have: 
\begin{enumerate}[label={\rm (\alph*)}, itemsep=1ex]
    \item \label{p4dso} $\frg_\DSO(4)= 1$. Moreover, if $v \in \bbR^4$ is irreducible, then  $\frg_\DSO(v)=1$ if and only if, up to an additive constant and a cyclic shift, $v$ has the form
    \begin{equation}
        v= (0,\lambda,0,-\lambda), \quad \lambda \neq 0.
    \end{equation}

\item \label{p4odjm} $\frg_\ODJM(4)=1$. Moreover, if $a \in \bbR_+^4$ is irreducible, then $\frg_\ODJM(a)=1$ if and only if
\begin{equation}
a_1 a_3=a_2a_4.
\end{equation}

\end{enumerate}
\end{theorem}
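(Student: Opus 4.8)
The plan is to recast the presence of a closed gap in terms of the $p$-periodic transfer (monodromy) matrix $M(E)$ and then to compute $M(E)$ explicitly for $p=4$. From the inverse spectral theory of periodic Jacobi matrices (cf.\ Appendix~\ref{sec:isotori}), $\lambda$ is a closed gap of $(a,v)$ exactly when it is a doubly degenerate periodic or antiperiodic eigenvalue, i.e.\ exactly when $M(\lambda)=I$ or $M(\lambda)=-I$, where $I$ denotes the $2\times 2$ identity; and from the strict interlacing $\lambda_1^-<\lambda_1^+\le\lambda_2^-<\lambda_2^+\le\cdots<\lambda_4^+$ one sees that for $p=4$ a periodic double eigenvalue can only occur at $\lambda_2^+=\lambda_3^-$ and an antiperiodic one only at $\lambda_1^+=\lambda_2^-$ or $\lambda_3^+=\lambda_4^-$, each such coincidence being precisely a closed gap. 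Hence $\frg(a,v)=\#\{E:M(E)=I\}+\#\{E:M(E)=-I\}$, with the first count at most $1$ and the second at most $2$. It therefore suffices to decide, for a given coefficient vector, whether $M(E)=\pm I$ is solvable, and with how many solutions.

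\emph{Part (a).} For $\boH_v$ the transfer matrices are $T_n(E)=\left(\begin{smallmatrix}E-v_n&-1\\1&0\end{smallmatrix}\right)$, each of determinant $1$, and $M(E)=T_4T_3T_2T_1$. Since $T_1T_4$ has determinant $1$, the equation $M(E)=\pm I$ is equivalent to $T_3T_2=\pm(T_1T_4)^{-1}$, and comparing the four entries is elementary. For the sign $-I$, the off-diagonal entries force $v_1=v_3$ and $v_2=v_4$ (the diagonal entries then pinning $E$ via $(E-v_1)(E-v_2)=2$), so $v$ has period $2$ and is reducible; thus an irreducible period-$4$ DSO has no antiperiodic closed gap and $\frg_\DSO(v)=\#\{E:M(E)=I\}$. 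For the sign $+I$, the four entries reduce (one of them being redundant given the others) to $v_1+v_3=v_2+v_4=2E$ together with $(E-v_2)(E-v_3)=0$; normalizing $\sum_jv_j=0$ by an additive constant gives $v_1+v_3=v_2+v_4=0$, so $E=0$, $v_3=-v_1$, $v_4=-v_2$, and then $(E-v_2)(E-v_3)=0$ reads $v_1v_2=0$, forcing $v_1=0$ or $v_2=0$, i.e.\ $v$ is a cyclic shift of $(0,\lambda,0,-\lambda)$. Thus $\frg_\DSO(v)\ge 1$ forces this form (with $\lambda\ne 0$ by irreducibility) and then $E=0$ is the unique solution, so $\frg_\DSO(v)\le 1$. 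Conversely, for $v=(0,\lambda,0,-\lambda)$, $\lambda\ne 0$ (which is clearly irreducible), a direct multiplication gives $M(0)=I$ while $v_2\ne v_4$ rules out $M(E)=-I$, so $\frg_\DSO(0,\lambda,0,-\lambda)=1$. This proves $\frg_\DSO(4)=1$ and the characterization.

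\emph{Part (b).} For $\boL_a$, first pass to determinant-$1$ transfer matrices by tracking $\left(\begin{smallmatrix}\psi(n+1)\\a_n\psi(n)\end{smallmatrix}\right)$; these are $\widehat S_n(E)=\left(\begin{smallmatrix}E/a_n&-1/a_n\\a_n&0\end{smallmatrix}\right)$, and $\widehat M(E)=\widehat S_4\widehat S_3\widehat S_2\widehat S_1$ is conjugate to the monodromy matrix of $\boL_a$, so detecting $\widehat M(E)=\pm I$ is the same as detecting $M(E)=\pm I$. Again $\widehat M(E)=\pm I$ is equivalent to $\widehat S_3\widehat S_2=\pm(\widehat S_1\widehat S_4)^{-1}$. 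For $E\ne 0$, comparing the off-diagonal entries and using $a_n>0$ immediately rules out $\widehat M(E)=I$, while $\widehat M(E)=-I$ forces $a_2a_3=a_1a_4$ and $a_3a_4=a_1a_2$, hence $a_1=a_3$ and $a_2=a_4$, i.e.\ reducibility. So for irreducible $a$ the only possible closed gap is at $E=0$. Finally $\widehat S_n(0)=\left(\begin{smallmatrix}0&-1/a_n\\a_n&0\end{smallmatrix}\right)$, and multiplying these out gives the diagonal matrix $\widehat M(0)=\mathrm{diag}\big(\tfrac{a_1a_3}{a_2a_4},\ \tfrac{a_2a_4}{a_1a_3}\big)$, which equals $I$ exactly when $a_1a_3=a_2a_4$ and can never equal $-I$. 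Hence, for irreducible $a$, $\frg_\ODJM(a)=1$ if $a_1a_3=a_2a_4$ and $\frg_\ODJM(a)=0$ otherwise; since e.g.\ $a=(1,2,4,2)$ is irreducible with $a_1a_3=a_2a_4$, this gives $\frg_\ODJM(4)=1$ and the stated characterization.

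Each individual computation is routine $2\times 2$ matrix algebra, so the crux is really the structural input placed at the outset: that for $p=4$ the identity $\frg(a,v)=\#\{E:M(E)=I\}+\#\{E:M(E)=-I\}$ is \emph{exact} --- a double periodic/antiperiodic eigenvalue occurs only at the indicated band edges and always corresponds to a genuine closed gap --- which is where the interlacing of the periodic and antiperiodic spectra (inverse theory, Appendix~\ref{sec:isotori}) is used. A secondary, purely technical, point is the determinant-$1$ renormalization of the transfer matrices in part (b), without which the entrywise comparison is awkward.
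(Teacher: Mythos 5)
Your proposal is correct and follows essentially the same route as the paper: rewrite $\Phi_v(E)=\pm\idty$ (resp.\ $\Psi_a(E)=\pm\idty$) as an equality between a half-product and the (signed) inverse of the other half, compare entries to show the antiperiodic sign forces reducibility and the periodic sign forces the stated normal forms, and note your ``renormalized'' matrices $\widehat S_n(E)$ are exactly the paper's $B(a_n,E)$. The only cosmetic differences (splitting as $T_3T_2=\pm(T_1T_4)^{-1}$ rather than $\Phi_{v_1v_2}=\pm[\Phi_{v_3v_4}]^{-1}$, and normalizing $\sum_j v_j=0$ instead of shifting the closed gap to $E=0$) do not change the argument.
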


\begin{theorem} \label{t:5}
We have:
\begin{enumerate}[label={\rm (\alph*)}, itemsep=1ex]
\item \label{p5dso} $\frg_\DSO(5) = 1$. Furthermore, $\frg_\DSO(v) \geq 1$ for $v \in \bbR^5$ if and only if $v$ has one of the following forms, up to an additive constant and a cyclic shift:
\begin{enumerate}[label={\rm (\roman*)}, itemsep=1ex]
\item \label{item:p5dso+2closed} $v = (\lambda,\eta,\frac{\lambda+1}{\lambda\eta-1},\lambda\eta-1,\frac{\eta+1}{\lambda\eta-1})$ for some $\lambda,\eta$ such that $\lambda\eta \neq 1$.
\item \label{item:p5dso-2closed} $v = (\lambda,\eta,\frac{\lambda-1}{\lambda\eta-1},1-\lambda\eta,\frac{\eta-1}{\lambda\eta-1})$ for some $\lambda,\eta$ such that $\lambda\eta \neq 1$.
\end{enumerate}
\item \label{p5odjm} $\frg_\ODJM(5) = 2$. Furthermore, for $a \in \bbR_+^5$, one has $\frg_\ODJM(a) \geq 2$ if and only if, up to a multiplicative constant and a cyclic shift, $a$ has the form 
\begin{equation}
a = \left(\alpha,\beta, \sqrt{\frac{\alpha^2+\beta^2-1}{\alpha^2-1}}, \frac{\alpha \beta}{\sqrt{(\alpha^2-1)(\beta^2-1)}}, \sqrt{\frac{\alpha^2+\beta^2-1}{\beta^2-1}}\right).
\end{equation}
for some $\alpha,\beta$ such that either $\alpha>1$ and $\beta>1$ or $\alpha^2+\beta^2 < 1$.
\end{enumerate}
\end{theorem}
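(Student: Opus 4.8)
The plan is to pass to the monodromy matrix and then reduce to polynomial algebra. For a period-$p$ coefficient vector let $T_{a,v}(\lambda)\in SL_2(\bbR)$ denote the monodromy matrix (the ordered product of the one-step transfer matrices) and $\Delta_{a,v}(\lambda)=\tr T_{a,v}(\lambda)$ the Hill discriminant, a degree-$p$ polynomial in $\lambda$. Because $J_{a,v}(0)$ and $J_{a,v}(1/2)$ are real symmetric, a double periodic (resp.\ antiperiodic) eigenvalue forces a two-dimensional Floquet eigenspace, so $\lambda_0$ is a closed gap of $(a,v)$ iff $T_{a,v}(\lambda_0)=\idty$ or $T_{a,v}(\lambda_0)=-\idty$, equivalently iff $\lambda_0$ is a double root of $\Delta_{a,v}-2$ or of $\Delta_{a,v}+2$. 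I will use the symmetries: adding a constant to $v$ translates the spectrum; cyclic shifts and reflection preserve it; scaling $a$ scales it; conjugation by $(-1)^n$ gives $\boH_v\cong-\boH_{-v}$ and $\boL_a\cong-\boL_a$, and (since $p=5$ is odd) interchanges periodic and antiperiodic boundary conditions. The two-step strategy for each part is: (1) \emph{solve} $T(\lambda_0)=\pm\idty$ to obtain the explicit family of coefficient vectors with a closed gap, and (2) prove the claimed maximum by showing no member of that family can acquire a second collision of band edges.

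For part \ref{p5dso}: subtracting a constant puts a closed gap at $\lambda=0$, and replacing $v$ by $-v$ (which exchanges the periodic and antiperiodic conditions, hence the two families) lets us assume $T_v(0)=\idty$. The entries of $T_v(0)$ are explicit polynomials in $v_1,\ldots,v_5$ (continuants of $-v_1,\ldots,-v_5$), so $T_v(0)=\idty$ is three independent polynomial equations; taking $v_1=\lambda,v_2=\eta$ as parameters, one equation gives $v_4$ as a rational function of $v_1,v_2,v_3$ and the remaining two pin down $v_3$ and $v_5$, producing exactly form \ref{item:p5dso+2closed} (with $\lambda\eta=1$ the degenerate locus, and the closed gap sitting at $\lambda=0$). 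Form \ref{item:p5dso-2closed} is its image under $v\mapsto-v$. Conversely, any $v$ with $\frg_\DSO(v)\ge1$ has a closed gap at some $E$, so $v-E\ones$ is, up to a cyclic shift, a member of one of these families; this proves the characterization in \ref{p5dso} modulo the bound $\frg_\DSO(5)\le1$.

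To finish \ref{p5dso} it suffices, by the reduction above, to show that an irreducible $v$ of form \ref{item:p5dso+2closed} has exactly one closed gap. For such $v$ we have $\Delta_v(\lambda)-2=\lambda^2q_{\lambda,\eta}(\lambda)$, with $q_{\lambda,\eta}$ a cubic whose coefficients are rational in $(\lambda,\eta)$ and with $q_{\lambda,\eta}(0)\ne0$ (the eigenvalue $0$ of $J_{\ones,v}(0)$ has multiplicity exactly two). A second closed gap occurs precisely when $q_{\lambda,\eta}$ has a multiple root (another double root of $\Delta_v-2$) or when $\lambda^2q_{\lambda,\eta}(\lambda)+4=\Delta_v(\lambda)+2$ has a multiple root. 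Since the roots of $q_{\lambda,\eta}$ and of $\Delta_v+2$ are eigenvalues of the real symmetric matrices $J_{\ones,v}(0)$ and $J_{\ones,v}(1/2)$, both $\mathrm{disc}_\lambda\, q_{\lambda,\eta}$ and $\mathrm{disc}_\lambda(\lambda^2q_{\lambda,\eta}+4)$ are \emph{nonnegative} functions of $(\lambda,\eta)$ on $\{\lambda\eta\ne1\}$. The heart of the argument is to compute these two discriminants and show that each vanishes only where $v$ is constant -- which, $5$ being prime, is the only possible reducibility, and which within form \ref{item:p5dso+2closed} happens exactly at the points with $\lambda=\eta$ and $\lambda^2=\lambda+1$. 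Concretely, after clearing denominators one factors each discriminant and recognizes the essential factor as a sum of squares vanishing precisely on $\{\lambda=\eta,\ \lambda^2-\lambda-1=0\}$. Granting this, no irreducible member of form \ref{item:p5dso+2closed} has a second closed gap, so $\frg_\DSO(5)=1$.

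For part \ref{p5odjm}: from $\boL_a\cong-\boL_a$ the discriminant $\Delta_a$ is an odd polynomial, so $\Delta_a(0)=0\ne\pm2$ and $0$ is never a band edge; hence closed gaps occur in pairs $\{E,-E\}$ and $\frg_\ODJM(a)$ is even. If $\frg_\ODJM(a)=4$, then all four gaps of the period-$5$ operator collapse, so by the Borg--Hochstadt theorem $a$ is constant, hence reducible; thus an irreducible $a$ has $\frg_\ODJM(a)\le3$, and by parity $\frg_\ODJM(a)\le2$. For the matching lower bound, note that $T_a(E)=\idty$ already forces the two closed gaps $\pm E$; using the scaling freedom to fix $E$, the equation $T_a(E)=\idty$ is three independent polynomial equations in $a\in\bbR_+^5$, and solving for $a_3,a_4,a_5$ in terms of $a_1=\alpha$, $a_2=\beta$ yields the displayed formula. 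These equations are polynomial in the $a_i^2$ with one quadratic branch -- the source of the square roots -- and the conditions $\alpha,\beta>1$ or $\alpha^2+\beta^2<1$ are exactly those making all radicands positive and $a_3,a_4,a_5$ real and positive. Hence $\frg_\ODJM(5)=2$. The single genuinely hard point in the whole argument is the positivity-and-vanishing analysis of the two reduced discriminants in the preceding paragraph; everything else is either a bounded computation (solving $T_v(0)=\idty$, $T_a(E)=\idty$) or a structural input (Borg--Hochstadt, the $(-1)^n$-symmetry, and reality of the spectra of symmetric matrices).
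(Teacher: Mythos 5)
Your overall frame (closed gap $\iff$ monodromy $=\pm\idty$, the shift/scaling normalizations, the $(-1)^n$ conjugation exchanging the two families, and the parity/Borg--Hochstadt argument giving $\frg_\ODJM(5)\le 2$) is sound, and your derivation of the two explicit families by solving $\Phi_v(0)=\idty$ matches what can be done. But the crucial step of part \ref{p5dso} --- the upper bound $\frg_\DSO(5)\le 1$ --- is not actually proved. You reduce it to the claim that, within family \ref{item:p5dso+2closed}, the discriminants of the reduced cubic $q_{\lambda,\eta}$ and of $\Delta_v+2$ vanish only at the constant potentials, and you then write ``Granting this\ldots''; the asserted factorization (``the essential factor is a sum of squares vanishing precisely on $\{\lambda=\eta,\ \lambda^2-\lambda-1=0\}$'') is never computed or verified, and it is exactly the hard content of the theorem. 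Nonnegativity of the discriminants is indeed automatic (all roots are eigenvalues of Hermitian Floquet matrices), but that gives you nothing about the vanishing locus. The paper deliberately avoids this route (and says so in a remark): instead of classifying first and then analyzing a two-parameter discriminant, it assumes two closed gaps $E_1\neq E_2$ directly and plays the $(2,2)$-entry of $\Phi_{v_1v_2v_3}(E)\mp[\Phi_{v_4v_5}(E)]^{-1}$ off against its cyclic permutations: two equal-sign gaps give two quadratics with equal roots, forcing $v$ constant, while mixed signs (after shifting so $E_1=-E_2$) give a linear relation putting $v$ in the kernel of a circulant matrix $Q(E)$ with $\det Q(E)=(2E+1)(E^2+E-1)$, and the two golden-ratio kernels are eliminated by two more entries. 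That argument is short and complete; your discriminant computation, if it works at all in the clean form you assert, still needs to be carried out.

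There is also a smaller slip in part \ref{p5odjm}. Solving $\Psi_a(1)=\idty$ in positive variables forces (as in the paper's computation) $a_1,a_3,a_4>1$, so it produces only the branch $\alpha,\beta>1$ of the displayed formula; the branch $\alpha^2+\beta^2<1$ corresponds to the other sign pattern $\Psi_a(\pm1)=\mp\idty$ (the outer gaps closing), which is a separate, parallel computation. Since $p=5$ is odd, the reflection symmetry exchanges periodic and antiperiodic conditions at $\pm E$ but cannot normalize away which sign occurs at $+1$, so your statement that the two parameter regions are ``exactly those making all radicands positive'' conflates positivity of the final formula with actually satisfying $T_a(1)=\idty$; as written the $\alpha^2+\beta^2<1$ case is unjustified, though it is easily repaired by running the same computation with the opposite sign.
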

\begin{remark}\mbox{}
\begin{enumerate}[label={\rm (\alph*)}, itemsep=1ex]
\item We note that  $\frg_\DSO(5)$ and $\frg_\ODJM(5)$ are both strictly less than $\frg_\JAC(5)$. 
\item Case~\ref{item:p5dso+2closed} in Part~\ref{p5dso} corresponds to a gap associated with periodic boundary conditions closing while Case~\ref{item:p5dso-2closed} corresponds to a gap associated with antiperiodic boundary conditions closing.
\item Cases~\ref{item:p5dso+2closed} and \ref{item:p5dso-2closed} contain the special case in which $v$ is constant (and hence all gaps close). For instance, taking $\lambda = \eta =\varphi := (\sqrt{5}+1)/2$ or $\lambda = \eta =-\varphi^{-1}$ in Case~\ref{item:p5dso+2closed} gives a constant potential.
\item One has a similar classification in the off-diagonal Jacobi case. Concretely, by a reflection symmetry that we will describe later (see Lemma~\ref{lem:odjmReflect}), the spectral gaps of $a$ are reflection symmetric about the origin, and hence occur in matched pairs (since $5$ is odd). The case $\alpha,\beta>1$ in Part~\ref{p5odjm} corresponds to the ``inner'' gaps closing while $\alpha^2+\beta^2<1$ corresponds to the ``outer'' gaps closing.
\item The choices $\alpha = \beta = \varphi$ and $\alpha = \beta = \varphi^{-1}$ in Part~\ref{p5odjm} yield the cases of constant off-diagonals in the two settings.
\item Let us point out some symmetries that are not readily apparent from the expressions. Putting
\[ f_+(x,y) = \frac{x+ 1}{xy-1}, \quad g_+(x,y) = (y,f_+(x,y)), \]
one can check via direct computations that for $v$ as in Case~\ref{item:p5dso+2closed} one has $v_{j+1} = f_+(v_{j-1},v_j)$ for any $j$ for which the latter is defined. In particular, we point out that $g_+^5(x,y) = (x,y)$ for any $(x,y)$ for which $g_+^5$ is defined. A similar formalism holds for $f_-(x,y) = (x-1)/(xy-1)$ and in the ODJM case as well.
\end{enumerate}
\end{remark}

We conclude the computations for small periods with $p=6$. 
\begin{theorem} \label{t:6} We have:
\begin{enumerate}[label={\rm (\alph*)}, itemsep=1ex]
\item \label{p6dso}     $\frg_\DSO(6) = 2$ 
\item \label{p6odjm} $\frg_\ODJM(6)=3$.
\end{enumerate}
\end{theorem}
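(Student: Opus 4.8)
The plan is to handle the two parts separately, establishing each lower bound by an explicit construction and each upper bound by an analysis of the discriminant $\Delta=\tr M$. Call a closed gap of $(a,v)$ at $\lambda$ of \emph{periodic} (resp.\ \emph{antiperiodic}) \emph{type} according as $\lambda$ is a repeated eigenvalue of $J_{a,v}(0)$ (resp.\ $J_{a,v}(1/2)$). For a DSO one has $\Delta-2=\det(E\idty-J_{\ones,v}(0))$ and $\Delta+2=\det(E\idty-J_{\ones,v}(1/2))$, and $\Delta$ is monic of degree $6$, so the closed gaps of periodic (antiperiodic) type are exactly the double roots of $\Delta-2$ ($\Delta+2$), a root of multiplicity $2m$ counting as $m$ closed gaps; writing $k_\pm$ for the two counts, $\frg_\DSO(v)=k_++k_-$. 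Two observations organize the argument. First, $\Delta-2$ cannot be the square of a nonconstant polynomial, since that would force $\Delta\ge2$ and hence $\spectrum(\boH_v)$ finite; in particular $k_+\le2$, so proving $\frg_\DSO(6)\le2$ reduces to excluding an irreducible $v$ with $k_++k_-\ge3$. Second, for the off-diagonal case Lemma~\ref{lem:odjmReflect} makes $\Delta$ even, so closed gaps occur in pairs $\{-\mu,\mu\}$ together with at most one closed gap at $0$; since five closed gaps would force constant coefficients by Borg--Hochstadt, $\frg_\ODJM(6)\le3$ reduces to excluding an irreducible $a$ with two symmetric pairs of closed gaps and none at the origin.

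For the lower bounds I would write down explicit one-parameter families. A DSO of minimal period $3$ regarded as having period $6$ satisfies $\Delta_6=\Delta_3^2-2$, so $\Delta_6+2=\Delta_3^2$ has a double root at every real zero of the monic cubic $\Delta_3$, giving up to three closed gaps for a reducible vector (all three when $v=\zeros$). For $\frg_\DSO(6)\ge2$ I would perturb inside the irreducible period-$6$ class while preserving the three-step reflection anti-symmetry $v_{n+3}=-v_n$, which keeps the spectrum symmetric about $0$, arrange that two of these closed gaps persist, and check irreducibility. For $\frg_\ODJM(6)\ge3$ I would use that $\boL_a^2$ decouples on the even and odd sublattices into two isospectral period-$3$ Jacobi matrices, with off-diagonals $a_{2j}a_{2j+1}$ and diagonals $a_{2j-1}^2+a_{2j}^2$; choosing $a$ so that this common period-$3$ Jacobi matrix attains its (single, since $\frg_\JAC(3)=1$) closed gap at an energy $E_0>0$ produces closed gaps of $\boL_a$ at $\pm\sqrt{E_0}$, and imposing that $0$ lie at the bottom of its spectrum forces $0\in\spectrum(\boL_a)$, which for the symmetric operator means $0$ is itself a closed gap. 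One then verifies the resulting $a$ can be taken irreducible (alternatively, one may simply exhibit an explicit reflection-symmetric family in $\bbR_+^6$).

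The upper bound $\frg_\DSO(6)\le2$ is the crux. After the gap-preserving normalization $\sum_j v_j=0$, write $\Delta(E)=E^6+pE^4+qE^3+rE^2+sE+t$. In any case with $k_-\ge3$, the first observation applied to $\Delta+2$ gives $\Delta+2=g(E)^2$ for a monic cubic $g$, so $\spectrum(\boH_v)=\{\,|g|\le2\,\}$ is the preimage of $[-2,2]$ under a cubic; each of its (at most three) components then has harmonic measure a multiple of $1/3$, so $\spectrum(\boH_v)$ is the spectrum of a periodic operator of period dividing $3$, and since the minimal period is constant on the isospectral torus of a finite-gap set, $\boH_v$ itself has minimal period dividing $3$, contradicting irreducibility. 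Otherwise $k_-\le2$, so (using $k_+\le2$ and $k_++k_-\ge3$) we are in one of the cases $(k_+,k_-)\in\{(2,1),(1,2),(2,2)\}$, which is where the real work lies: here one has only the looser factorizations $\Delta-2=(E-\rho_1)^2(E-\rho_2)^2(E^2+\beta E+\gamma)$ and $\Delta+2=(E-\sigma)^2h(E)$ (or the mirror), tied together by $(\Delta-2)-(\Delta+2)=-4$. I expect this to reduce to a finite elimination: encode the repeated eigenvalues by $\chi_0(\rho_i)=\chi_0'(\rho_i)=0$ and $\chi_{1/2}(\sigma)=\chi_{1/2}'(\sigma)=0$ for the characteristic polynomials $\chi_0,\chi_{1/2}$ of $J(0),J(1/2)$, eliminate $\rho_1,\rho_2,\sigma$, exploit the dihedral symmetry $D_6$ together with the shift to reduce the number of free coordinates among the $v_j$, and verify that the resulting ideal defines only the union of the period-$2$ and period-$3$ loci. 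The main obstacle is carrying out and certifying this Gr\"obner/resultant computation; in particular reality and positivity play no role here, so the mixed cases are genuinely delicate rather than excluded by a sign obstruction.

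For $\frg_\ODJM(6)\le3$ the same scheme applies but is lighter: since $\Delta$ is even, the substitution $x=E^2$ halves all degrees, so excluding an irreducible $a$ with two symmetric pairs of closed gaps becomes a smaller instance of the above elimination; alternatively it follows by extending the argument behind Theorem~\ref{t:pgeq4}\ref{pgeq4odjmub} to $p=6$. Combining the explicit families with the case analyses yields $\frg_\DSO(6)=2$ and $\frg_\ODJM(6)=3$, and comparison with $\frg_\JAC(6)=4$ from \eqref{eq:gJAC} records that both quantities drop strictly below the bound furnished by the inverse theory.
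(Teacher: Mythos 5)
Your proposal does not close the crux of part~\ref{p6dso}. The preparatory reductions are fine: $k_+\le 2$, and your exclusion of $k_-\ge 3$ via $\Delta+2=g^2$ for a monic cubic, so that every band has harmonic measure a multiple of $1/3$ and Theorem~\ref{t:isotorusfacts} forces period dividing $3$, is a correct (and pleasantly soft) argument. But the remaining cases $(k_+,k_-)\in\{(2,1),(1,2),(2,2)\}$ --- which you yourself flag as ``where the real work lies'' --- are only deferred to an unexecuted Gr\"obner/resultant elimination, with the admission that carrying it out and certifying it is the main obstacle. That elimination \emph{is} the theorem; without it you have no proof that $\frg_\DSO(6)\le 2$. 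The paper handles exactly these cases by hand, rewriting $\Phi_v(E)=\pm\idty$ as $\Phi_{v_1v_2v_3}(E)\mp[\Phi_{v_4v_5v_6}(E)]^{-1}=0$ and reading off entries together with their cyclic permutations: if both periodic gaps close then $v=(a,0,0,-a,0,0)$ up to a constant and a shift; for such $v$ no antiperiodic gap can close; and for irreducible $v$ at most one antiperiodic gap can close. These are short linear/quadratic manipulations, not a black-box ideal computation, so your ``reduce to a finite elimination'' step is a genuine gap rather than a routine omission.

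The lower bounds are also not actually established. For the DSO bound $\frg_\DSO(6)\ge 2$ you propose to perturb a reducible period-$3$ configuration while preserving the anti-symmetry $v_{n+3}=-v_n$ and to ``arrange that two of these closed gaps persist,'' but that symmetry only makes the spectrum symmetric about $0$; it does not keep any gap closed, and you exhibit no potential and verify no closed gap. (What is needed is an explicit family such as $v=(a,0,0,-a,0,0)$ with $\Phi_v(\pm1)=\idty$ checked directly.) Likewise, your $\boL_a^2$-decoupling plan for $\frg_\ODJM(6)\ge 3$ is a reasonable idea, but you never verify that the closed-gap condition for the induced period-$3$ Jacobi matrix, the condition $0\in\spectrum(\boL_a)$ (equivalently $a_1a_3a_5=a_2a_4a_6$), and irreducibility of $a$ can be satisfied simultaneously; the paper settles this by solving the resulting equations explicitly and producing the two-parameter family \eqref{eq:p6odjm:3closedaform}. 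The one piece that is complete and essentially coincides with the paper is the ODJM upper bound: four closed gaps would mean both symmetric pairs close, leaving a two-component spectrum with components of harmonic measure $1/2$, whence period $2$ by Theorem~\ref{t:isotorusfacts}, contradicting irreducibility.
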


The rest of the paper is laid out as follows. We review some relevant background in Section~\ref{sec:background}, prove the exact computations for $\frg$ with $p \leq 6$ in Section~\ref{sec:comp}, and discuss the upper and lower bounds for larger $p$ in Section~\ref{sec:gen}. Appendix~\ref{sec:isotori} collects some important facts from the general inverse theory, and Appendix~\ref{sec:cx} discusses complex Jacobi matrices, in particular, why we formulate our results for Jacobi matrices with strictly positive off-diagonals.

\subsection*{Acknowledgements} We are grateful to Milivoje Luki\'{c} and Wencai Liu for comments on an earlier draft.

\section{Preliminaries} \label{sec:background}

Let us briefly recall important definitions and results from Floquet theory that will be used in the proofs of the main results. For textbook treatments, see \cite{Simon2011Szego, Teschl2000Jacobi}.

\begin{definition}
For  $s \in \bbR_+$, $t \in \bbR$, denote
\[B(s,t) 
:=  \frac{1}{s}\begin{bmatrix}
    t & -1\\ s^2 & 0
\end{bmatrix}. \]
If $(a,v) \in \bbR_+^p \times \bbR^p$, then the associated \emph{monodromy matrix} is defined by
\[ \Phi(E) = \Phi_{a,v}(E)
:= B(a_p,E- v_p) \cdots B(a_2, E-v_2) B(a_1, E- v_1) \]
and the \emph{discriminant} is given by
\begin{equation}
    D(E) = D_{a,v}(E) = \tr\,\Phi_{a,v}(E).
\end{equation}
\end{definition}

The significance of $B$ (and hence of $\Phi$) comes from noting that if $\boJ_{a,v}u = zu$ for a sequence $u:\bbZ \to \bbC$ and a scalar $z \in \bbC$, then
\begin{equation}
    \begin{bmatrix}
        u(n+1) \\ A(n)u(n)
    \end{bmatrix}
    = B(A(n), z - V(n))\begin{bmatrix}
        u(n) \\ A(n-1)u(n-1)
    \end{bmatrix}
\end{equation}
for every $n \in \bbZ$.

The spectrum of $\boJ_{a,v}$ is then given by
\begin{equation}
    \spectrum(\boJ_{a,v}) = \set{E \in \bbR : -2 \le D_{a,v}(E) \le 2}
\end{equation}
and furthermore $E$ is the location of a closed spectral gap of $(a,v)$ if and only if $\Phi_{a,v}(E) = \pm \idty$, where $\idty$ denotes the identity matrix.

For the off-diagonal Jacobi case, we need to leverage a suitable reflection symmetry. This is well-known and included to keep the paper self-contained.

\begin{lemma} \label{lem:odjmReflect}
    Consider $a \in \bbR_+^p$ and let $L_a(\theta) = J_{a,\zeros}(\theta)$ denote the corresponding Floquet matrix. 
    \begin{enumerate}[label={\rm (\alph*)}, itemsep=1ex]
    \item If $p$ is even, then $L_a(\theta)$ is unitarily equivalent to $-L_a(\theta)$. 
    \item If $p$ is odd, then $L_a(\theta)$ is unitarily equivalent to $-L_a(\theta+\frac{1}{2})$.
    \item If $p$ is even, then $E =0$ either lies in an open spectral gap, or it is the location of a closed spectral gap. 
    
    \item If $p$ is odd, then $E=0$ lies in the interior of a spectral band.
    \end{enumerate}

    In particular, $a$ has a closed gap at energy $E \in \bbR$ if and only if it has a closed gap at $-E$.
\end{lemma}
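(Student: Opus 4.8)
The plan is to produce an explicit unitary conjugation implementing the reflection, deduce a symmetry of the band functions from it, and then read off the spectral statements. First I would introduce $U = \mathrm{diag}((-1)^1,(-1)^2,\ldots,(-1)^p)$, a real orthogonal (hence unitary) $p\times p$ matrix with $U=U^{-1}$. Conjugating a matrix by the diagonal matrix $U$ multiplies its $(j,k)$ entry by $U_{jj}U_{kk}=(-1)^{j+k}$, and $L_a(\theta)=J_{a,\zeros}(\theta)$ has zero diagonal, entries $a_j$ in positions $(j,j+1)$ and $(j+1,j)$ for $1\le j\le p-1$, and corner entries $e^{\pm 2\pi i\theta}a_p$ in positions $(p,1)$ and $(1,p)$. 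Hence $U L_a(\theta)U^{-1}$ again has zero diagonal, has entries $-a_j$ in positions $(j,j+1)$ and $(j+1,j)$ (consecutive indices have opposite parity), and has corner entries $(-1)^{p+1}e^{\pm 2\pi i\theta}a_p$. When $p$ is even, $(-1)^{p+1}=-1$, so every off-diagonal entry has been negated and $U L_a(\theta)U^{-1}=-L_a(\theta)$, which is part~(a). When $p$ is odd, $(-1)^{p+1}=+1$, so only the corners keep their sign; absorbing the remaining $-1$ into the corners via $e^{\pm 2\pi i(\theta+\frac12)}=-e^{\pm 2\pi i\theta}$ gives $U L_a(\theta)U^{-1}=-L_a(\theta+\tfrac12)$, which is part~(b). (Here one uses that $\theta\mapsto\theta+\tfrac12$ is a bijection of $\bbR/\bbZ$.)

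Next I would extract the consequence for the band functions. Writing $\lambda_1(\theta)\le\cdots\le\lambda_p(\theta)$ for the eigenvalues of $L_a(\theta)$ as in the introduction, the eigenvalues of $-L_a(\vartheta)$ are $-\lambda_p(\vartheta)\le\cdots\le-\lambda_1(\vartheta)$; comparing with parts~(a)/(b) yields $\lambda_j(\theta)=-\lambda_{p+1-j}(\theta)$ for all $\theta$ when $p$ is even, and $\lambda_j(\theta)=-\lambda_{p+1-j}(\theta+\tfrac12)$ when $p$ is odd. Taking minima and maxima over $\theta\in[0,1]$ — again using bijectivity of $\theta\mapsto\theta+\tfrac12$ in the odd case — gives in both cases
\[
\lambda_j^- = -\lambda_{p+1-j}^+, \qquad \lambda_j^+ = -\lambda_{p+1-j}^-, \qquad 1\le j\le p.
\]
Thus $E\mapsto-E$ carries the band $[\lambda_j^-,\lambda_j^+]$ onto the band $[\lambda_{p+1-j}^-,\lambda_{p+1-j}^+]$, hence carries the gap between bands $j$ and $j+1$ onto the gap between bands $p-j$ and $p-j+1$; in particular a gap is closed precisely when its mirror image is, which is the final assertion of the lemma (with a closed gap at $E=0$ being its own mirror image).

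Finally I would read off parts~(c) and~(d) from this identity. If $p$ is even, taking $j=p/2$ gives $\lambda_{p/2}^+=-\lambda_{p/2+1}^-$, so $\lambda_{p/2}^+\le 0\le\lambda_{p/2+1}^-$: either all three are equal, so $0$ is the location of a closed gap, or $\lambda_{p/2}^+<0<\lambda_{p/2+1}^-$, so $0$ lies in the open gap between bands $p/2$ and $p/2+1$; the ordering $\lambda_1^-<\lambda_1^+\le\lambda_2^-<\cdots$ shows $0$ lies in no other band or gap, proving~(c). If $p$ is odd, the middle band $j=(p+1)/2$ is fixed by the reflection, so $\lambda_{(p+1)/2}^+=-\lambda_{(p+1)/2}^-$; since bands are nondegenerate ($\lambda_j^-<\lambda_j^+$, recalled in the introduction) this forces $\lambda_{(p+1)/2}^-<0<\lambda_{(p+1)/2}^+$, i.e., $0$ is interior to that band, proving~(d).

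I do not expect any step to be a genuine obstacle; the only point needing care is the sign bookkeeping in the corner entries together with the half-period shift of $\theta$ forced when $p$ is odd. As a shortcut to the final sentence alone, one can alternatively observe that conjugating each factor of the monodromy matrix by $\sigma=\mathrm{diag}(1,-1)$ sends $B(a_n,E)$ to $-B(a_n,-E)$, whence $\Phi_{a,\zeros}(E)$ and $(-1)^p\Phi_{a,\zeros}(-E)$ are conjugate; since $E$ is the location of a closed gap iff $\Phi_{a,\zeros}(E)=\pm\idty$, the reflection symmetry of closed gaps follows at once.
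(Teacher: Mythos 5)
Your proof is correct and follows essentially the same route as the paper: the paper's entire argument is conjugation by the same unitary $U=\mathrm{diag}(-1,1,\ldots,(-1)^p)$, and you simply spell out the sign bookkeeping (including the half-period shift of $\theta$ for odd $p$) and the resulting band symmetries $\lambda_j^{\pm}=-\lambda_{p+1-j}^{\mp}$ that the paper leaves implicit. Your closing remark via $\sigma=\mathrm{diag}(1,-1)$ acting on the transfer matrices is a valid alternative for the final assertion, but it is not needed given the main argument.
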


\begin{proof}
    All statements follow by conjugating $L_a(\theta)$ with the unitary matrix $$U = \mathrm{diag}(-1,1,-1,1,\ldots,(-1)^p),$$
    that is, $[Uv]_n = (-1)^n v_n$.
\end{proof}

When we discuss the case of discrete Schr\"odinger operators, we often abbreviate
\[
M(t) := B(1,t)=
\begin{bmatrix}
    t & -1 \\ 1 & 0
\end{bmatrix},
\]
and, given $v \in \bbR^p$ and $E \in \bbR$, we write
\begin{align*} \Phi_v(E)
:= \Phi_{\ones,v}(E)
& = M(E-v_p) \cdots M(E-v_1) \\
& = \Phi_{v_p}(E) \cdots \Phi_{v_1}(E). \end{align*}
For the off-diagonal Jacobi case, we similarly write
\begin{align*} 
\Psi_a(E) := \Phi_{a,\zeros}(E)
& = B(a_p,E) \cdots B(a_1,E) \\
& = \Psi_{a_p}(E) \cdots \Psi_{a_1}(E).  \end{align*}
We also will sometimes use free monoid notation, for instance writing $v=v_1v_2\cdots v_p$ for a general element of $\bbR^p$. For $v \in \bbR^p$, $w \in \bbR^q$, we then write $vw = v_1\cdots v_p w_1 \cdots w_q \in \bbR^{p+q}$ for their concatenation. From this point of view, the map from $v$ to $\Phi_v(E)$ at fixed $E$ is an \emph{anti-homomorphism} in the sense that
\begin{equation}
    \Phi_{vw}(E) = \Phi_w(E)\Phi_v(E).
\end{equation}

\begin{remark} \label{rem:shift} There are a few ideas that we use to simplify the calculations.  First, one may freely rewrite the identity $\Phi_v(E)=\pm \idty$ as
\begin{equation} 
\Phi_{v_1\cdots v_m}(E) \mp [\Phi_{v_{m+1}\cdots v_p}(E)]^{-1} = 0 \end{equation}
for a suitable choice of $m$; generally, we choose $m = \lceil p/2\rceil$, which has the effect of reducing the complexity of the system(s) of polynomial equations under consideration by (approximately) a factor of two. Second, one can note that
\[ \Phi_{\cyclic(v)}(E) = \Phi_{v_1}(E) \Phi_v(E) [\Phi_{v_1}(E)]^{-1}, \]
so $\Phi_v(E) = \pm \idty$ if and only if $\Phi_{\cyclic(v)}(E) = \pm \idty$. Thus, for any identity that is obtained from the assumption of a closed gap, one may cyclically permute the variables to obtain $p-1$ additional relations. Concretely, if one obtains a relation that is \emph{linear} in the variables of $v$, then one can immediately deduce that $v$ lies in the kernel of an explicit circulant matrix. This is put to use explicitly in the proof of Theorem~\ref{t:5}, and implicitly in the proofs of Theorems~\ref{t:6} and \ref{t:pgeq4}.  Third and finally, $v$ has a closed gap at energy $E$ if and only if $v+c\ones$ has a closed gap at energy $E+c$. Thus, we can use an additive constant to shift the gaps, which is sometimes useful to simplify the calculations or exploit a symmetry argument. Similarly, in the ODJM case, one can use a multiplicative constant to scale the locations of the gaps; that is, $a$ has a closed gap at energy $E$ if and only if $ca$ has a closed gap at energy $cE$.
\end{remark}


\section{Explicit Computations for Small Periods} \label{sec:comp}

\begin{proof}[Proof of Theorem~\ref{t:123}]
\ref{p2dso}
 As noted above, the statement in this case already follows from the abstract theory. We work it out explicitly since the calculations here are helpful for later cases. Let $v=(v_1,v_2) \in \bbR^2$ be given, and observe
\begin{align} \nonumber
    \Phi_v(E) 
    & = \begin{bmatrix}
        E - v_2 & -1 \\ 1 & 0
    \end{bmatrix}
    \begin{bmatrix}
        E - v_1 & -1 \\ 1 & 0
    \end{bmatrix}\\
    \label{eq:p2Phiv}
    & = 
    \begin{bmatrix}
        (E-v_1)(E-v_2)-1 & -(E-v_2)\\
        (E-v_1) & -1
    \end{bmatrix}.
\end{align}
This immediately implies $\Phi_v(E)= \idty$ is impossible. 
If $\Phi_v(E) = -\idty$, then adding the $(1,2)$ and $(2,1)$ entries of \eqref{eq:p2Phiv} gives
$$ 0= (E-v_1)-(E-v_2) = v_2-v_1,$$
which implies that $v$ is reducible. Thus, no gaps may close for irreducible $v \in \bbR^2$.
\bigskip

\ref{p3dso}
Let $v=(v_1,v_2,v_3)$ be given, and assume that $v$ has a closed gap at $E \in \bbR$. Rearranging $\Phi_v(E) = \pm \idty$ as suggested in Remark~\ref{rem:shift}, we get
\begin{align}
\Phi_{v_1v_2}(E) = \pm [\Phi_{v_3}(E)]^{-1}, 
\end{align}
which, using \eqref{eq:p2Phiv}, gives
 \begin{equation}
   \begin{bmatrix}
        (E-v_1)(E-v_2)-1 & -(E-v_2)\\
        (E-v_1) & -1
    \end{bmatrix}
    = \pm
    \begin{bmatrix}
        0 & 1 \\ -1 & E-v_3
    \end{bmatrix}.
 \end{equation}
 By examining the $(1,2)$, $(2,1)$, and $(2,2)$ entries, we get
 \[ E-v_1 = E-v_2 = E-v_3 = \mp 1, \]
 leading to $v_1=v_2=v_3$, which implies that $v$ is not irreducible. Therefore, there are no irreducible $v \in \bbR^3$ with a closed gap.
\bigskip

\ref{p2odjm} Let $a=(a_1,a_2) \in \bbR_+^2$ be given, and observe
\begin{align} \nonumber
    \Psi_a(E) 
    & = \frac{1}{a_1a_2} \begin{bmatrix}
        E & - 1 \\ a_2^2 & 0
    \end{bmatrix}
    \begin{bmatrix}
        E & - 1 \\ a_1^2 & 0
    \end{bmatrix}\\
    \label{eq:p2Psia}
    & = 
    \frac{1}{a_1a_2}
    \begin{bmatrix}
        E^2-a_1^2 & -E\\
        a_2^2 E & -a_2^2
    \end{bmatrix}.
\end{align}
Then $\Psi_a(E) = \idty$ is impossible and $\Psi_a(E) = - \idty$ forces $E = 0$, and in turn $a_1=a_2$, which implies that $a$ is reducible. 
\medskip

\ref{p3odjm}
Let $a=(a_1,a_2,a_3) \in \bbR_+^3$ be given,\footnote{At the risk of being repetitive, we point out that there is also a ``soft'' argument in this case as well. By reflection symmetry, if one gap closes, then both gaps close and hence $a$ is constant by the Borg--Hochstadt theorem \cite{Borg1946Acta, Hochstadt1975LAA, Hochstadt1984LAA}. Indeed (the generalization of) this argument is precisely the basis of (part of) the proof of Theorem~\ref{t:pgeq4}.\ref{pgeq4odjmub}.} and assume $E \in \bbR$ is  a closed gap of $a$. Rearranging $\Psi_a(E) = \pm \idty$ using \eqref{eq:p2Psia} gives
 \begin{equation}
   \frac{1}{a_1a_2}
    \begin{bmatrix}
        E^2-a_1^2 & -E\\
        a_2^2 E & -a_2^2
    \end{bmatrix}
    = \pm
    \frac{1}{a_3}
    \begin{bmatrix}
        0 & 1 \\ -a_3^2 & E
    \end{bmatrix}.
 \end{equation}
Examining the  $(1,2)$, $(2,1)$, and $(2,2)$ entries yields
\[ \frac{a_1a_2}{a_3} = \frac{a_1a_3}{a_2} = \frac{a_2a_3}{a_1} = \mp E,  \]
which implies $a_1=a_2=a_3$ and hence that $a$ is reducible.
 Therefore, there are no irreducible $a \in \bbR_+^3$ with a closed gap.
\end{proof}

\begin{proof}[Proof of Theorem~\ref{t:4}]
\ref{p4dso}     Given $v \in \bbR^4$, assume that $E$ is a closed gap of $v$. We will show that $\Phi_v(E)=-\idty$ implies $v$ is reducible and $\Phi_v(E)=\idty$ forces $v$ to be as claimed in the theorem statement.
\setcounter{case}{0}
        \begin{case}
            \textbf{\boldmath $\Phi_v(E) = -\idty$.}
        \end{case}
        As discussed in Remark~\ref{rem:shift}, we rewrite $\Phi_v(E)=-\idty$ as $\Phi_{v_1v_2}(E) =- [\Phi_{v_3v_4}(E)]^{-1}$.
Writing this out with \eqref{eq:p2Phiv} yields
 \begin{equation}
   \begin{bmatrix}
        (E-v_1)(E-v_2)-1 & -(E-v_2)\\
        (E-v_1) & -1
    \end{bmatrix}
    = -
    \begin{bmatrix}
         -1 & (E-v_4)\\
        -(E-v_3) & (E-v_3)(E-v_4)-1
    \end{bmatrix}.
 \end{equation}
 Looking at the $(1,2)$ and $(2,1)$ entries gives $v_1 = v_3$ and $v_2=v_4$, so this case implies $v$ is reducible.
        
\begin{case}
            \textbf{\boldmath $\Phi_v(E) = \idty$.}
        \end{case}
        By adding a constant to $v$, we may assume without loss that $E=0$. Rearranging the identity $\Phi_v(E)=\idty$ as above and setting $E=0$ yields
 \begin{equation}
   \begin{bmatrix}
        v_1 v_2-1 & v_2\\
        -v_1 & -1
    \end{bmatrix}
    = 
    \begin{bmatrix}
         -1 & -v_4\\
        v_3 & v_3 v_4-1
    \end{bmatrix}.
 \end{equation}
From this we see that one has $v_1=-v_3$, $v_2=-v_4$, and $v_1v_2=0$. Thus, up to a cyclic shift, $v$ has the desired form, proving the ``only if'' part of \ref{p4dso}. A direct computation verifies that $v$ has a closed gap at $E = 0$ whenever  $v$ has the form $(0,\lambda ,0,-\lambda)$ for some $ \lambda $. Since the presence of closed gaps is preserved by cyclic shifts and additive constants, this proves the ``if'' direction.\bigskip

\ref{p4odjm} Let $a \in \bbR_+^4$ be given, and suppose $a$ has a closed gap at $E \in \bbR$.
\setcounter{case}{0}
 \begin{case}
     \textbf{\boldmath $\Psi_a(E)=-\idty$}
 \end{case}
Using \eqref{eq:p2Psia}, we can rewrite this as
 \[ \frac{1}{a_1a_2}
    \begin{bmatrix}
        E^2-a_1^2 & -E\\
        a_2^2 E & -a_2^2
    \end{bmatrix}
    = - \frac{1}{a_3a_4}
    \begin{bmatrix}
        -a_4^2 & E\\
       - a_4^2 E &  E^2-a_3^2
    \end{bmatrix}\]
    First, by looking at the $(1,2)$ entries, we see that either $E = 0$ or $a_1a_2=a_3a_4$. However, $E = 0$ is impossible; indeed, if $E=0$, the $(1,1)$ entry of the left hand side is negative and the corresponding entry of the right hand side is positive. Thus, we have $E \neq 0$ and
    \begin{equation} \label{eq:a12=a34} a_1a_2=a_3a_4.
    \end{equation}
    Using this and examining the $(2,1)$ entries, we deduce $a_1 = a_3$ and $a_2=a_4$,  which implies that $a$ is reducible.

Thus, neither of the gaps corresponding to $\tr\, \Psi= -2$ may close for $p=4$ and irreducible $a \in \bbR_+^4$.

    \begin{case}
     \textbf{\boldmath $\Psi_a(E)=\idty$}
 \end{case}
Rewriting as above gives us
 \[ \frac{1}{a_1a_2}
    \begin{bmatrix}
        E^2-a_1^2 & -E\\
        a_2^2 E & -a_2^2
    \end{bmatrix}
    =  \frac{1}{a_3a_4}
    \begin{bmatrix}
        -a_4^2 & E\\
       - a_4^2 E &  E^2-a_3^2
    \end{bmatrix}\]
If $E \neq 0$, then the $(1,2)$ entries of each side have opposite signs, so we must have $E =0$. Looking at the $(1,1)$ entries gives $a_1/a_2 = a_4/a_3$, as desired. Moreover, one can check from these calculations that if $a_1a_3=a_2a_4$, then $a$ has a closed gap at $E=0$.
\end{proof}

\begin{proof}[Proof of Theorem~\ref{t:5}]
\ref{p5dso} First, note that if $v$ has the form given in Case~\ref{item:p5dso+2closed} or Case~\ref{item:p5dso-2closed}, then $v$ enjoys a closed gap at $E=0$ by direct computations: for instance, if $v$ is as in Case~\ref{item:p5dso+2closed},  then applying \eqref{eq:p2Phiv} and \eqref{eq:p3Phiv} with $E =0$, we arrive at
\begin{align*}
\Phi_{v_1v_2v_3}(0) - [\Phi_{v_4v5}(0)]^{-1}
& = \begin{bmatrix}
        -v_1v_2v_3+ v_1 +v_3  & -v_2 v_3+1 \\
        v_1 v_2 -1 & v_2
    \end{bmatrix}
- \begin{bmatrix}
        -1 & - v_5 \\
        v_4 & v_4v_5 -1
    \end{bmatrix} \\
& = \begin{bmatrix}
-\frac{\lambda\eta(\lambda + 1)}{\lambda \eta-1} + \lambda + \frac{\lambda+1}{\lambda \eta-1} + 1  & -\frac{\eta(\lambda+1)}{\lambda \eta - 1}+\frac{\eta + 1}{\lambda \eta - 1}+1 \\
\lambda \eta-(\lambda \eta-1)-1 & -(\eta+1) + \eta + 1
\end{bmatrix} \\
& = 0 .
\end{align*}
Since such $v$ are irreducible whenever $\lambda \neq \eta$, we deduce also that $\frg_\DSO(5) \geq 1$.

For the other inequality, let us show that if $v$ has two or more closed gaps, then $v$ is constant (in particular, reducible). To that end, assume $v$ has two closed gaps. This implies that $v$ either has two closed gaps satisfying $\tr(\Phi_v(E))=+2$, two closed gaps satisfying $\tr(\Phi_v(E))=-2$, or one of each flavor. We will show that any of these situations forces $v$ to be a constant vector.

\setcounter{case}{0}
\begin{case} \textbf{\boldmath $\Phi_v(E_1)=\Phi_v(E_2)=\idty$ for some $E_1\neq E_2$.}
\end{case}
As before, we may rewrite this as $\Phi_{v_1v_2v_3}(E_k)-[\Phi_{v_4v_5}(E_k)]^{-1} = 0$ for $k=1,2$. Computing with \eqref{eq:p2Phiv}, we have\small
\begin{align}
    \nonumber
    \Phi_{v_1v_2v_3}(E) 
    & = \begin{bmatrix}
        E - v_3 & -1 \\ 1 & 0
    \end{bmatrix}
    \begin{bmatrix}
        (E-v_1)(E-v_2)-1 & -(E-v_2)\\
        (E-v_1) & -1
    \end{bmatrix}
    \\
    \label{eq:p3Phiv}
    & = 
    \begin{bmatrix}
        (E-v_1)(E - v_2)(E - v_3) - (E - v_1) - (E-v_3) & -(E-v_2)(E-v_3)+1 \\
        (E-v_1)(E-v_2)-1 & - (E-v_2)
    \end{bmatrix}
\end{align}
\normalsize
Putting together \eqref{eq:p2Phiv}, \eqref{eq:p3Phiv}, and the assumption $\Phi_{v_1v_2v_3}(E_k) - [\Phi_{v_4v_5}(E_k)]^{-1}=0$, we get
\begin{equation}
\begin{bmatrix}
    * & * \\ * & -(E_k-v_2) - ((E_k-v_4)(E_k-v_5)-1)
\end{bmatrix} = 0,
\end{equation}
that is
\begin{equation}
    -E_k^2+(v_4+v_5-1)E_k +1  + v_2 -v_4v_5 = 0.
\end{equation}
Recalling the discussion in Remark~\ref{rem:shift}, we may cyclically permute the variables and thus we have
\begin{equation} \label{eq:p5diff1entry22}
    -E_k^2 +(v_{j-1} + v_{j-2}-1)E_k + 1+v_{j+1}-v_{j-1}v_{j-2} = 0, \quad 1\le j \le 5, \ k =1,2.
\end{equation}
Two quadratics with the same roots and the same leading coefficient must have all coefficients the same, so
 (applying this to $j$ and $j+1$), we see
\begin{equation}
    v_{j-1}+v_{j-2}=v_{j}+v_{j-1},
\end{equation}
leading to $v_{j-2}=v_j$ for all $j$ and thus,
\[ v_1 = v_3 = v_5 = v_2 = v_4, \]
that is, $v$ is constant.

\begin{case} \textbf{\boldmath $\Phi_v(E_1)=\Phi_v(E_2) = -\idty$ for some $E_1\neq E_2$.}
\end{case}
This case is similar to the previous one, but instead one considers $\Phi_{v_1v_2v_3}(E) + [\Phi_{v_4v_5}(E)]^{-1}$
and computes its $(2,2)$ entry to see that
\begin{equation} \label{eq:p5sum1entry22}
    E_k^2 -(v_{j-1} + v_{j-2} + 1)E_k -1+   v_{j+1} + v_{j-1} v_{j-2} = 0, \quad 1 \le j \le 5, \ k=1,2.
\end{equation}

The final case is somewhat more involved.

\begin{case} \textbf{\boldmath $\Phi_v(E_1)=-\Phi_v(E_2) = \idty$ for some $E_1\neq E_2$.}
\end{case}

By adding a constant to $v$, we may assume that $E_1 = -E_2 =: E$. Apply \eqref{eq:p5diff1entry22} with $E_k=E$, \eqref{eq:p5sum1entry22} with $E_k=-E$ and add them together to get
\begin{equation}
    E(v_{j-1}+v_{j-2})+v_{j+1}=0, \quad 1 \le j \le 5.
\end{equation}
Since this holds for every $j$, we see that $v$ lies in the kernel of the circulant matrix
\begin{equation}
Q(E):=
    \begin{bmatrix}
     E & E & 0 & 1 & 0  \\ 
     0 & E & E & 0 & 1  \\
     1 & 0 & E & E & 0  \\
     0 & 1 & 0 & E & E  \\
     E & 0 & 1 & 0 & E
    \end{bmatrix}
\end{equation}
We compute the determinant of $Q(E)$ and factor to get
\begin{equation}
    \det (Q(E)) = (2E+1)(E^2+E-1).
\end{equation}
In particular, we observe that the kernel of $Q$ is trivial unless $E = -1/2$, $E = -\varphi$ or $E = 1/\varphi$, where $\varphi = \frac{1}{2}(\sqrt{5}+1)$ denotes the golden ratio. When $E=-1/2$, the kernel of $Q$ is one-dimensional and spanned by $v=\ones$. Thus, for any $E \neq -\varphi, \varphi^{-1}$, we can already see that $v$ is reducible.

\begin{subcase}
    \textbf{\boldmath $E = -\varphi$.}
\end{subcase}

One readily computes that $\ker Q(-\varphi)$ is two-dimensional and spanned by the vectors $v^\bullet \in \bbR^5$
\[ v^{\cos}_n = 4\cos(4\pi (n-1) / 5), \quad v^{\sin}_n = \sqrt{8}\sin(4\pi (n-1)/5), \quad 1\le n \le 5. \]
(The $n-1$ is chosen to make $v_1$ correspond to $\cos(0)=1$ and $\sin(0)=0$, and the prefactors are chosen to simplify a few fractions).
Thus, we must have $v = av^{\cos} + bv^{\sin}$ for scalars $a,b \in \bbR$, so we may write
\begin{align*}
    v& = \bigg
(4a, 
-a(\sqrt{5} + 1) + b\sqrt{5-\sqrt{5}}, 
a(\sqrt{5}-1) - b\sqrt{5  + \sqrt{5}}, \ldots \\
& \qquad\qquad \ldots 
a(\sqrt{5}-1)+b\sqrt{5+\sqrt{5}}, 
-a(\sqrt{5}+1)-b \sqrt{5-\sqrt{5}}\bigg)
\end{align*} Recalling that $\Phi_v(E)=\idty$ and $\Phi_v(-E)=-\idty$ by assumption, we have
\[\Phi_v(-\varphi) = \idty.\]
Rewriting this as
\[\Phi_{v_1v_2v_3}(-\varphi) - [\Phi_{v_4v_5}(-\varphi)]^{-1} = 0\]
and computing the entries (using $\varphi^2=\varphi+1$ to simplify) gives us
\begin{align*}
    0
    & = [\Phi_{v_1v_2v_3}(-\varphi) - [\Phi_{v_4v_5}(-\varphi)]^{-1}]_{12} \\
    & = -(-\varphi-v_2)(-\varphi-v_3)+1 - (-\varphi-v_5) \\
    & = -(v_2+v_3)\varphi-v_2v_3  + v_5.
\end{align*}
Noting that $5\pm \sqrt{5} = 2\sqrt{5}\varphi^{\pm 1}$, we can see
\begin{align*}
    v_5-(v_2+v_3)\varphi 
    & =\left(-a(\sqrt{5}+1)-b \sqrt{5-\sqrt{5}}\right)\\
    & \qquad -\left( -a(\sqrt{5} + 1) + b\sqrt{5-\sqrt{5}} + 
a(\sqrt{5}-1) - b\sqrt{5  + \sqrt{5}}\right)\varphi \\
& = 0,
\end{align*}
and thus we arrive at
\begin{align}
\nonumber
    0
    & =-v_2v_3  \\
    \nonumber
   & = -\left(-a(\sqrt{5} + 1) + b\sqrt{5-\sqrt{5}} \right)
   \left( 
a(\sqrt{5}-1) - b\sqrt{5  + \sqrt{5}} \right)\\
\label{eq:p5:Phirearranged12}
& = 4a^2 - 4\sqrt{5+\sqrt{5}}\,ab +2\sqrt{5}\, b^2
\end{align}
In a similar way, looking at the $(2,2)$ entry gives
\begin{equation} \label{eq:p5:Phirearranged22}
     0 = 4a^2 + 4\sqrt{5+\sqrt{5}}\,ab +2\sqrt{5}\, b^2
\end{equation}
Adding \eqref{eq:p5:Phirearranged12} and \eqref{eq:p5:Phirearranged22} gives
\[ 8a^2 + 4\sqrt{5} \, b^2  = 0, \]
and thus $a=b=0$, which implies $v = \zeros$ and hence is reducible.

\begin{subcase}
    \textbf{\boldmath $E = 1/\varphi$.} \end{subcase}
    Similar to before, $\ker Q(1/\varphi)$ is two-dimensional and spanned by the vectors $u^\bullet \in \bbR^5$
\[ u^{\cos}_n = 4\cos(2\pi (n-1) / 5), \quad u^{\sin}_n = \sqrt{8} \sin(2\pi (n-1)/5), \quad 1\le n \le 5. \]
One deduces $a=b=0$ by similar considerations to those in the previous subcase.
\medskip

Having exhausted the possible cases, we conclude that for $v \in \bbR^5$, $\frg_\DSO(v) \geq 2$ implies that $v$ is constant, and thus $\frg_\DSO(5) \leq 1$. Since we have already exhibited a pair of two-parameter families with $\frg_\DSO(v) \geq 1$, we conclude $\frg_\DSO(5)=1$.
\bigskip

Next, we want to show that the presence of (at least one) closed gap implies that $v$ has one of the forms in \ref{item:p5dso+2closed} or \ref{item:p5dso-2closed}. To that end, assume that $v \in \bbR^5$ has a closed gap. Since we work up to translation, we may assume that the closed gap occurs at  $E = 0$. 

\setcounter{case}{0}
\begin{case} \textbf{\boldmath $\Phi_v(0) = +\idty$.}\end{case}
 Applying \eqref{eq:p2Phiv} and \eqref{eq:p3Phiv} with $E =0$, we arrive at
\begin{align*}
0 
& = \Phi_{v_1v_2v_3}(0) - [\Phi_{v_4v5}(0)]^{-1} \\
& = \begin{bmatrix}
        -v_1v_2v_3+ v_1 +v_3  & -v_2 v_3+1 \\
        v_1 v_2 -1 & v_2
    \end{bmatrix}
- \begin{bmatrix}
        -1 & - v_5 \\
        v_4 & v_4v_5 -1
    \end{bmatrix} \\
& = \begin{bmatrix}
-v_1v_2v_3+v_1+v_3 + 1 & -v_2v_3+v_5+1 \\
v_1v_2-v_4-1 & -v_4v_5 + v_2 + 1
\end{bmatrix}.
\end{align*}
Looking at the $(1,1)$ entry and (say) the $(1,2)$ entry, and cyclic permutations of the indices we have 
\begin{align}
\label{eq:p5dso:relation1}
-v_jv_{j+1} v_{j+2} +v_j + v_{j+2} + 1 & = 0 , \quad \forall 1 \le j \le 5 \\ 
\label{eq:p5dso:relation2}
v_jv_{j+1} - v_{j+3}- 1 & =0 \quad \forall 1 \le  j \le 5.
\end{align}
\begin{subcase}
\textbf{\boldmath $v_1v_2 = 1$.}
\end{subcase}

Inserting this assumption into \eqref{eq:p5dso:relation1} with $j=1$ gives 
\begin{equation} 
0 = -v_1 v_2 v_3 + v_1+v_3 + 1, = v_1+1,
\end{equation}
so we see $v_1=-1$, which (by the assumption $v_1v_2=1$) in turn forces $v_2=-1$. Using \eqref{eq:p5dso:relation2} with $j=1$ gives
\[v_4 =v_1v_2-1 = 1-1=0.\]
In a similar way, $v_3 = -v_5-1$. Thus, $v$ has the form $v = (-1,-1-v_5 - 1,0,v_5)$ in this case. Therefore, after a cyclic shift, this has the claimed form with $\lambda = 0$ $\eta = v_5$.

\begin{subcase}
\textbf{\boldmath $v_1v_2 \neq 1$.} 
\end{subcase}
In this case, we can solve \eqref{eq:p5dso:relation1} (with $j=1$) for $v_3$ in terms of $v_1$ and $v_2$ to get
\begin{equation} v_3 = \frac{v_1+1}{v_1v_2-1}. \end{equation}
We get $v_4$ from \eqref{eq:p5dso:relation2} via
\begin{equation} 
v_4= v_1v_2-1.\end{equation}
Finally, we may solve \eqref{eq:p5dso:relation1} with $j=5$ for $v_5$ to get
\begin{equation}
    v_5 = \frac{v_2+1}{v_1 v_2-1}.
\end{equation}

\begin{case} \textbf{\boldmath $\Phi_v(0) =  - \idty$.}\end{case}
This is similar and precisely leads to the other possible form of $v$.

\bigskip

\ref{p5odjm} By reflection symmetry as in Lemma~\ref{lem:odjmReflect}, any nonzero spectral gaps come in matched pairs. Thus, $a \in \bbR_+^5$ has a closed gap at $E \neq 0$ if and only if it also has a closed gap at $-E$. Since $0$ is in the interior of a band, each $a \in \bbR_+^5$ may have $0$, $2$, or $4$ closed gaps. Due to Theorem~\ref{t:isotorusfacts}, if $a$ has $4$ closed gaps, then it is a constant vector, hence reducible. Thus, we automatically have $\frg_\ODJM(5) \leq 2$.

Now, assume $a$ exhibits a closed gap. Scaling $a$ by a positive constant, we may assume that $a$ has closed gaps at $E = \pm 1$. Suppose first that the ``inner'' pair of gaps close. This corresponds to
\begin{equation}
\Psi_a(1) = \idty, \quad \Psi_a(-1)=-\idty.
\end{equation}
As usual, rewrite the first identity as $\Psi_{a_1a_2a_3}( 1) - [\Psi_{a_4a_5}(1)]^{-1}=0$, leading us to 
\begin{align}
\label{eq:p5odjmPsia123-Psia45}
    0 
    = \Psi_{a_3a_2a_1}(1)-[\Psi_{a_4a_5}(1)]^{-1}
     = \begin{bmatrix}
        \frac{1-a_1^2-a_2^2}{a_1a_2a_3}+\frac{a_5}{a_4}
        & \frac{a^2_2-1}{a_1a_2a_3}-\frac{1}{a_4a_5} \\[2mm]
        \frac{a_3}{a_1a_2} - \frac{a_1a_3}{a_2} + \frac{a_5}{a_4}
        & -\frac{a_3}{a_1a_2}+ \frac{a_4^2-1}{a_4a_5}.
    \end{bmatrix}
\end{align}
Using the $(2,1)$ entry gives
\begin{equation} \label{eq:p5odjma5}
    a_5  =  \frac{a_3a_4 }{a_2}(a_1-a_1^{-1}).
\end{equation}
Notice that this forces $a_1 > 1$, since $a_5 > 0$ by definition.
On the other hand, subtracting the $(2,1)$ entry from the $(1,1)$ entry (and multiplying by $a_1a_2a_3$ to clear denominators)
yields
\[1-a_1^2-a_2^2-a_3^2+ a_1^2a_3^2= 0.\]
Solving for $a_2$, one has
\begin{equation}\label{eq:p5odjma2}
    a_2  = \sqrt{(a_1^2 - 1)(a_3^2 - 1)}.
\end{equation}
At this point, we note that $a_2 > 0$, so \eqref{eq:p5odjma2} and $a_1>1$ forces $a_3 > 1$ as well.
Substituting \eqref{eq:p5odjma5} and \eqref{eq:p5odjma2} into the $(1,2)$ entry of \eqref{eq:p5odjmPsia123-Psia45} (and multiplying by $a_1^2 a_2^2 a_4 a_5$ to clean up) gives
\begin{align*}
0
& = (a_2^2-1) \frac{a_1a_2a_4a_5}{a_3} - a_1^2a_2^2\\
& = (a_1^2a_3^2 - a_1^2 - a_3^2)a_4^2(a_1^2-1) - a_1^2 (a_1^2-1)(a_3^2-1) \\
& = (a_1^2-1)\Big( a_1^2a_3^2 a_4^2 - a_1^2 a_4^2 -a_3^2 a_4^2  - a_1^2a_3^2 +a_1^2 \Big).
\end{align*}
As discussed above, $a_1^2-1\neq 0$ and hence the second factor vanishes. Thus,
solving for $a_1$, we have
\begin{align} \label{eq:p5odjma1}
    a_1  = \frac{a_3 a_4}{\sqrt{(a_3^2-1)(a_4^2-1)}}.
\end{align}
Since we deduced $a_3>1$ earlier, \eqref{eq:p5odjma1} forces $a_4>1$ as well.

Now, we substitute \eqref{eq:p5odjma1} back into \eqref{eq:p5odjma2} and simplify to obtain
\begin{align*}
a_2  
& = \sqrt{(a_1^2 - 1)(a_3^2 - 1)} \\
& = \sqrt{\left(\frac{a_3^2a_4^2}{(a_3^2-1)(a_4^2-1)} - 1\right)(a_3^2 - 1)} \\
& = \sqrt{\frac{a_3^2 + a_4^2-1}{a_4^2-1}}.
\end{align*}
Finally, \eqref{eq:p5odjma5} determines $a_5$:
\begin{align*}
a_5  
& =  \frac{a_3a_4 }{a_2}(a_1-a_1^{-1}) \\
& =  a_3a_4 \sqrt{\frac{a_4^2-1}{a_3^2+a_4^2-1}} \left[\frac{a_3 a_4}{\sqrt{(a_3^2-1)(a_4^2-1)}} -\frac{\sqrt{(a_3^2-1)(a_4^2-1)}}{a_3 a_4} \right] \\
& = \frac{1}{\sqrt{a_3^2 + a_4^2-1}} \left[ \frac{a_3^2 + a_4^2 -1}{\sqrt{a_3^2 - 1}}\right] \\
& = \sqrt{ \frac{a_3^2 + a_4^2-1}{a_3^2 -1} }.
\end{align*}
Thus, the desired characterization holds in this situation after a cyclic shift using $\alpha= a_3$ and $\beta = a_4$. 

If $\Psi_a(\pm 1) = \mp \idty$ (which corresponds to the outer pair of gaps closing), then one can verify by completely similar computations that the desired form holds with $\alpha^2+\beta^2<1$. As usual, checking that the gaps do indeed close for $a$ satisfying one of the given conditions follows from direct computations.
\end{proof}

\begin{remark}
In principle, one could try to leverage the explicit representations for period-5 potentials exhibiting at least one closed gap to show that no more than one gap can close for nonconstant potentials. However, we found the algebraic manipulations to be simpler in the argument that we gave.
\end{remark}

\begin{proof}[Proof of Theorem~\ref{t:6}]

\ref{p6dso} Assume $v \in \bbR^6$ is irreducible. To show $\frg_\DSO(v)\leq 2$, it suffices to prove the following statements.
\begin{claim} \label{claim:p6dso1}
If both gaps with $\tr\,\Phi=+2$ close, then $v$ has the form 
\begin{equation} \label{eq:p6:va00a00}
v=(a,0,0,-a,0,0), \quad a \neq 0,
\end{equation} 
up to an additive constant and a cyclic shift.
\end{claim}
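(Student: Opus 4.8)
The plan is to start from the observation that, for $p = 6$, the hypothesis of Claim~\ref{claim:p6dso1} says exactly that there are two \emph{distinct} real numbers $E_1 \neq E_2$ with $\Phi_v(E_1) = \Phi_v(E_2) = \idty$ (the ``$\tr\,\Phi = +2$'' gaps being those two gaps whose band-edge endpoints satisfy $D = +2$, and at a closed gap one has $\Phi_v = +\idty$ precisely when $D = +2$ there; the two collapse points are then distinct energies). Following Remark~\ref{rem:shift}, I would rewrite each identity as
\[ \Phi_{v_1 v_2 v_3}(E_k) = [\Phi_{v_4 v_5 v_6}(E_k)]^{-1}, \qquad k = 1, 2, \]
using \eqref{eq:p3Phiv} for the left-hand side and the fact that $\det \Phi_{v_4 v_5 v_6} = 1$ to invert the right-hand side explicitly (so its $(2,1)$ entry is $-[(E-v_4)(E-v_5)-1]$).

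The key point is that a single matrix entry already suffices. Equating the $(2,1)$ entries gives
\[ (E - v_1)(E - v_2) + (E - v_4)(E - v_5) = 2, \]
so the quadratic $2E^2 - (v_1 + v_2 + v_4 + v_5)E + (v_1 v_2 + v_4 v_5 - 2)$ vanishes at $E = E_1$ and $E = E_2$. By the cyclic invariance in Remark~\ref{rem:shift}, $\Phi_v(E_k) = \idty$ forces $\Phi_{\cyclic^j(v)}(E_k) = \idty$ for every $j$, so, writing $q_j(E) := 2E^2 - (v_j + v_{j+1} + v_{j+3} + v_{j+4})E + (v_j v_{j+1} + v_{j+3} v_{j+4} - 2)$ with indices mod $6$, each $q_j$ vanishes at the same two distinct points. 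Since two quadratics with the same nonzero leading coefficient and the same two roots coincide, all six $q_j$ are equal; comparing the linear and constant coefficients of $q_j$ and $q_{j+1}$ yields
\[ v_j + v_{j+3} = v_{j+2} + v_{j+5}, \qquad v_j v_{j+1} + v_{j+3} v_{j+4} = v_{j+1} v_{j+2} + v_{j+4} v_{j+5} \]
for all $j$ mod $6$.

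The first family forces a common value $v_1 + v_4 = v_2 + v_5 = v_3 + v_6$. Adding a suitable constant to $v$ — which by Remark~\ref{rem:shift} only translates the locations of the closed gaps — I may normalize this value to $0$, so that $v_{j+3} = -v_j$ for all $j$. Substituting this into the second family collapses it to $v_{j+1}(v_j - v_{j+2}) = 0$ for all $j$, i.e.\ to the three relations
\[ v_2(v_1 - v_3) = v_3(v_1 + v_2) = v_1(v_2 + v_3) = 0. \]
A short case analysis on the number of nonzero entries among $v_1, v_2, v_3$ finishes the proof. If all three vanish, then $v = \zeros$ is reducible. If exactly two are nonzero, one of the three displayed relations fails. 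If all three are nonzero, the relations force $v = (v_1, -v_1, v_1, -v_1, v_1, -v_1)$, which has minimal period $2$ and hence is reducible. Thus exactly one of $v_1, v_2, v_3$ is nonzero, and combined with $v_{j+3} = -v_j$ this says $v$ equals $(a, 0, 0, -a, 0, 0)$ up to a cyclic shift, with $a \neq 0$ by irreducibility, as claimed. The only step that requires genuine care is the reduction of the hypothesis to ``there exist two distinct real $E$ with $\Phi_v(E) = \idty$''; after that the argument is entirely mechanical, and the pleasant surprise is that the lone $(2,1)$ entry carries all the information needed.
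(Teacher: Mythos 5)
Your proof is correct, and while it shares the paper's overall framework---reduce the hypothesis to $\Phi_v(E_1)=\Phi_v(E_2)=\idty$ with $E_1\neq E_2$, split $v$ into two half-words as in Remark~\ref{rem:shift}, extract a quadratic relation in $E$ from a single off-diagonal entry, and exploit cyclic invariance---your endgame is genuinely different and leaner. The paper works with the $(1,2)$-entry relation \eqref{eq:p6two+2gapscloserel1} (the cyclic-shift analogue of your $(2,1)$ relation), normalizes $E_1=-E_2$ by an additive shift, and subtracts the two evaluations so that only the \emph{linear} coefficients survive; it must then return to the $(1,1)$ entry (a cubic) to obtain $v_2v_3(v_2+v_3)=0$, and its case analysis evaluates further matrix entries (the full matrix at $E=1$ in two cases, and the sum of the $(1,2)$ and $(2,1)$ entries in the third). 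You instead note that the six cyclically permuted quadratics $q_j$ have the same two roots and the same leading coefficient $2$, hence coincide, so you may compare \emph{both} the linear and the constant coefficients; after normalizing the common value $v_j+v_{j+3}$ to zero this yields $v_{j+3}=-v_j$ together with $v_2(v_1-v_3)=v_3(v_1+v_2)=v_1(v_2+v_3)=0$, and your count of how many of $v_1,v_2,v_3$ vanish is exhaustive (all three nonzero gives the reducible $(t,-t,t,-t,t,-t)$, exactly two nonzero contradicts one of the relations, all zero is constant, and exactly one nonzero is \eqref{eq:p6:va00a00} up to a cyclic shift). So a single matrix entry really does carry all the information, which the paper's route obscures; the paper's normalization $E_1=-E_2$ is mainly a carry-over from its period-$5$ computation, where gaps of opposite parity close and that symmetrization is essential. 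Your preliminary reduction of the hypothesis (the two $D=+2$ gaps close at distinct energies, with $\Phi_v=+\idty$ there) and the legitimacy of the additive normalization are also handled correctly.
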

\begin{claim} \label{claim:p6dso2}
If $v$ has the form \eqref{eq:p6:va00a00}, then no gaps with $\tr\,\Phi=-2$ may close
\end{claim}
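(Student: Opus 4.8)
The plan is to argue by contradiction, exploiting the rigidity forced by the symmetric placement of $\pm a$. Since the presence of a closed gap of either type is unaffected by additive constants and cyclic shifts (Remark~\ref{rem:shift}), it suffices to treat $v = (a,0,0,-a,0,0)$ with $a \neq 0$ exactly, and to suppose toward a contradiction that $\Phi_v(E) = -\idty$ for some $E \in \bbR$. Following the splitting heuristic of Remark~\ref{rem:shift} with $m=3$, I would first rewrite this identity as
\[ \Phi_{v_1v_2v_3}(E) = -\bigl[\Phi_{v_4v_5v_6}(E)\bigr]^{-1}, \]
where here $v_1v_2v_3 = (a,0,0)$ and $v_4v_5v_6 = (-a,0,0)$.

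Next I would expand both sides using~\eqref{eq:p3Phiv}: the left-hand side is obtained with $(E-v_1,E-v_2,E-v_3) = (E-a,E,E)$, and the right-hand side with $(E+a,E,E)$ followed by the adjugate formula for the inverse, which is available because each factor $M(t)$ has determinant one. Comparing the four entries, the $(1,2)$ components of the two sides coincide identically (both equal $1-E^2$), so that scalar equation is vacuous --- this is the imprint of the $\pm a$ symmetry. The $(2,1)$ components give $(E-a)E - 1 = (E+a)E - 1$, hence $2aE = 0$, which forces $E=0$ since $a\neq 0$. Plugging $E=0$ into the $(1,1)$ component (equivalently the $(2,2)$ component) then yields $a = 0$, contradicting $a \neq 0$. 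Therefore no gap with $\tr\,\Phi = -2$ can close, which is the assertion of Claim~\ref{claim:p6dso2}; combined with Claim~\ref{claim:p6dso1} this gives $\frg_\DSO(v) \leq 2$ for irreducible $v \in \bbR^6$.

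I do not expect a real obstacle: the computation is short and essentially forced once the splitting is in place. The only thing demanding care is the bookkeeping of the anti-homomorphism order $\Phi_{vw}(E) = \Phi_w(E)\Phi_v(E)$ and of the matrix inversion, so that the two genuinely informative scalar equations --- the $(2,1)$ relation, and then the $(1,1)$ relation specialized to $E=0$ --- are extracted correctly. It is perhaps worth recording the conceptual point: the symmetric $\pm a$ placement makes the off-diagonal $(1,2)$ constraint evaporate, leaving $E=0$ as the only candidate, and $\Phi_v(0)=-\idty$ is immediately impossible for this $v$; this is exactly the asymmetry between the two boundary conditions that underlies the value $\frg_\DSO(6) = 2$.
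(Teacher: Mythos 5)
Your proof is correct and follows essentially the same route as the paper: split at $m=3$, use the $(2,1)$ entry of $\Phi_{v_1v_2v_3}(E)+[\Phi_{v_4v_5v_6}(E)]^{-1}=0$ to get $2aE=0$ and hence $E=0$, then derive the contradiction $a=0$ from a diagonal entry at $E=0$. The entry computations check out, so nothing further is needed.
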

\begin{claim} \label{claim:p6dso3}
    No more than one gap with $\tr\,\Phi=-2$ may close.
\end{claim}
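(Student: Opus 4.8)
The plan is to reduce the claim to an over-determined system of polynomial relations and then finish with a short combinatorial case split. First I would observe that a gap with $\tr\,\Phi_v = -2$ closes at $E$ precisely when $\Phi_v(E) = -\idty$, and that distinct such closed gaps occur at distinct energies (the intervening bands being nondegenerate). Hence it suffices to show that an irreducible $v \in \bbR^6$ admits at most one $E \in \bbR$ with $\Phi_v(E) = -\idty$, so I would argue by contradiction, assuming $\Phi_v(E_1) = \Phi_v(E_2) = -\idty$ for some $E_1 \neq E_2$.

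Next, following Remark~\ref{rem:shift} with $m = 3$, I would rewrite $\Phi_v(E) = -\idty$ as $\Phi_{v_1v_2v_3}(E) + [\Phi_{v_4v_5v_6}(E)]^{-1} = 0$ and expand both terms using \eqref{eq:p3Phiv} together with the usual formula for the inverse of a $2 \times 2$ matrix of determinant $1$. The key observation is that, although the diagonal entries of the resulting matrix are cubic in $E$, the quadratic parts of the two off-diagonal entries cancel; for instance the $(2,1)$ entry yields the affine relation
\[
(v_4 + v_5 - v_1 - v_2)\, E \;=\; v_4 v_5 - v_1 v_2 .
\]
Since this holds at the two distinct values $E_1, E_2$, both sides must vanish, so $v_1 + v_2 = v_4 + v_5$ and $v_1 v_2 = v_4 v_5$; thus $v_1, v_2$ and $v_4, v_5$ are the roots of a common monic quadratic, i.e.\ $\{v_1, v_2\} = \{v_4, v_5\}$ as multisets. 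Applying the cyclic-shift principle of Remark~\ref{rem:shift} (each shift $\cyclic^k(v)$ also satisfies $\Phi_{\cyclic^k(v)}(E_i) = -\idty$ for $i = 1, 2$), I would obtain $\{v_j, v_{j+1}\} = \{v_{j+3}, v_{j+4}\}$ for every $j \in \bbZ/6\bbZ$.

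The endgame is then elementary. Since $v_4 \in \{v_1, v_2\}$, there are two cases. If $v_4 = v_1$, then $\{v_1, v_5\} = \{v_1, v_2\}$ forces $v_5 = v_2$, whereupon $\{v_2, v_6\} = \{v_2, v_3\}$ forces $v_6 = v_3$, so $v = (v_1, v_2, v_3, v_1, v_2, v_3)$ has period dividing $3$. If instead $v_4 = v_2 \neq v_1$, then $\{v_2, v_5\} = \{v_1, v_2\}$ forces $v_5 = v_1$, and since $v_1 \neq v_2$ the relation $\{v_1, v_6\} = \{v_2, v_3\}$ forces $v_1 = v_3$ and $v_6 = v_2$, so $v = (v_1, v_2, v_1, v_2, v_1, v_2)$ has period dividing $2$. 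In either case $v$ is reducible, contradicting the hypothesis, which proves the claim.

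I expect the only substantive point is the observation that the off-diagonal entries of $\Phi_{v_1v_2v_3}(E) + [\Phi_{v_4v_5v_6}(E)]^{-1}$ are affine, rather than cubic, in $E$; this is precisely what makes two closed gaps with $\tr\,\Phi_v = -2$ so rigid, and after that everything is routine bookkeeping. It also clarifies the contrast with the $\tr\,\Phi_v = +2$ case: there the analogous off-diagonal entries are genuinely quadratic in $E$, so the assumption of two closed gaps is much weaker, which is why Claims~\ref{claim:p6dso1} and~\ref{claim:p6dso2} require a separate and more delicate analysis.
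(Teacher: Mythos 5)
Your argument is correct and essentially the same as the paper's: both hinge on the fact that the off-diagonal entries of $\Phi_{v_1v_2v_3}(E)+[\Phi_{v_4v_5v_6}(E)]^{-1}$ are affine in $E$, so two distinct energies with $\Phi_v(E)=-\idty$ force both the sum and product relations $v_1+v_2=v_4+v_5$, $v_1v_2=v_4v_5$ (and their cyclic shifts), after which a short case split contradicts irreducibility. Your packaging of these relations as the multiset identities $\{v_j,v_{j+1}\}=\{v_{j+3},v_{j+4}\}$ is simply a tidier bookkeeping of the same information the paper extracts from the $(2,1)$ and $(1,2)$ entries.
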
 

\begin{claimproof}[Proof of Claim~\ref{claim:p6dso1}]
    Assume $\Phi_v(E_1)= \Phi_v(E_2) = \idty$ for $E_1 \neq E_2$. We will show that this implies either that $v$ is reducible (contrary to our assumption) or that $v$ has the form \eqref{eq:p6:va00a00} up to an additive constant and a cyclic shift.

To that end, we may shift $v$ by a constant and assume that $E_1=-E_2$. From \eqref{eq:p3Phiv}, we note that for $E=E_k$, we have
\begin{align}
\nonumber
0
    & = [\Phi_{v_1v_2v_3}(E)-\Phi_{v_4v_5v_6}(E)^{-1}]_{12} \\
    \label{eq:p6two+2gapscloserel1}
    & = 2-2E^2+(v_2+v_3+v_5+v_6)E-v_2v_3-v_5v_6.
\end{align}
Similar to the argument for $p=5$, apply \eqref{eq:p6two+2gapscloserel1} with $E=E_1,E_2$, subtract the results, and cyclically permute the indices to see that $v$ satisfies
\begin{align}
    \label{eq:p6two+2gapsclosesum1}
    v_1+v_2 + v_4 + v_5 = 0 \\
    \label{eq:p6two+2gapsclosesum2}
    v_2+v_3 + v_5 + v_6 = 0 \\
    \label{eq:p6two+2gapsclosesum3}
    v_1+v_3 + v_4 + v_6 = 0 .
\end{align}
Adding \eqref{eq:p6two+2gapsclosesum1} to \eqref{eq:p6two+2gapsclosesum2} and subtracting \eqref{eq:p6two+2gapsclosesum3} gives $v_5 = -v_2$. In a similar way, we also deduce
\[v_4=-v_1,\quad v_6 = -v_3.\]
Substituting this into \eqref{eq:p6two+2gapscloserel1}, we have $2-2E^2-2v_2v_3=0$, leading to
\begin{equation} \label{eq:p6:E21-v2v3}
E^2 =1-v_2v_3. \end{equation}
Using this, (again taking $E = E_k$) we can substitute and simplify
\begin{align}
    \nonumber
    0
    & = [\Phi_{v_1v_2v_3}(E)-\Phi_{v_4v_5v_6}(E)^{-1}]_{11} \\
    \nonumber
    & =(E-v_1)(E - v_2)(E - v_3) - (E - v_1) - (E-v_3) +(E+v_2) \\
     & = E^3-(v_1+v_2+v_3)E^2+c_1E+v_1+v_2+v_3-v_1v_2v_3, 
\end{align}
where $c_1$ is independent of $E$.
Evaluating this expression for $E=E_1,E_2$, adding the results, dividing by two, and using \eqref{eq:p6:E21-v2v3} gives
\begin{align}
    \nonumber
    0
    & = -(v_1+v_2+v_3)(1-v_2v_3)+v_1+v_2+v_3-v_1v_2v_3 \\
    & = v_2v_3(v_2+v_3).
\end{align}
There are thus three possibilities: $v_2=0$, $v_3=0$, or $v_3=-v_2$.
\begin{case}
    \textbf{\boldmath $v_2=0$.}
    \end{case}

On account of \eqref{eq:p6:E21-v2v3}, this gives $E^2=1$. Taking $E=1$ and using the relations obtained so far gives us
    \[0
    = \Phi_{v_1v_2v_3}(1)-[\Phi_{v_4 v_5 v_6}(1)]^{-1}
    = \begin{bmatrix}
        v_1 v_3 & 0 \\ 0 & -v_1v_3.
    \end{bmatrix}.
    \]
    This forces $v_1=0$ or $v_3=0$, either of which implies that $v$ has the form \eqref{eq:p6:va00a00} up to a cyclic shift.

\begin{case}
    \textbf{\boldmath $v_3=0$.}
    \end{case}
    Similar to the case $v_2=0$, this forces $v$ to be of the form \eqref{eq:p6:va00a00} up to a cyclic shift.
    \begin{case}
    \textbf{\boldmath $v_3= -v_2$.}
    \end{case}

    Adding the $(1,2)$ and $(2,1)$ entries of the matrix $\Phi_{v_1v_2v_3}(E_k) - [\Phi_{v_4v_5v_6}(E_k)]^{-1}$ gives us $v_2^2+v_1v_2=0$. The case $v_2=0$ leads again to the form \eqref{eq:p6:va00a00}, while the case $v_2=-v_1$ forces $v$ to have the form $v=(a,-a,a,-a,a,-a)$, which is reducible.\end{claimproof} 
    
\bigskip
\begin{claimproof}[Proof of Claim~\ref{claim:p6dso2}]
    Consider $v$ of the form \eqref{eq:p6:va00a00} for some $a \neq 0$ and observe that it has exactly two closed gaps. Indeed, one can check directly that
\[ \Phi_v(1) =\Phi_v(-1)= \idty. \]
However, $\Phi_v(E)\neq -\idty$ for all $E$. Indeed, if $\Phi_v(E)= -\idty$, we have
\begin{align*}
    0
    & = [\Phi_{v_1v_2v_3}(E) + [\Phi_{v_4 v_5 v_6}(E)]^{-1}]_{21} \\
    & = 
        -2aE.
\end{align*}
Since $a \neq 0$, this forces $E=0$. But then we compute
\begin{align*}
0
    & = \Phi_{v_1v_2v_3}(0) + [\Phi_{v_4 v_5 v_6}(0)]^{-1} \\
    & = \begin{bmatrix}
        a
        & 0 \\
        0 & -a
    \end{bmatrix} \\
& \neq 0,
    \end{align*}
    a contradiction.\end{claimproof}
\bigskip

\begin{claimproof}[Proof of Claim~\ref{claim:p6dso3}]
     Suppose $\Phi_v(E_1)=\Phi_v(E_2)= -\idty$ for $E_1\neq E_2$. Observe then that
    \begin{align}
        \nonumber
        0
        & = [\Phi_{v_1v_2v_3}(E_k) + [\Phi_{v_4 v_5 v_6}(E_k)]^{-1}]_{21} \\
        \nonumber
        & = (E_k - v_1)(E_k - v_2) - 1 - \big((E_k - v_4)(E_k - v_5) - 1 \big) \\
        \label{eq:p6:trace-2closedv2v3-v5v6}
        & = (v_4+v_5 - v_1 - v_2)E_k + v_1v_2 - v_4v_5.
    \end{align}
    Since this holds for $E_1$ and $E_2$, the coefficient of $E_k$ must vanish. Permuting the variables cyclically gives us
    \begin{align}
    \label{p6:trace-2closedv1+v2-v4-v5}
        v_1+v_2 - v_4 - v_5 & = 0 \\
    \label{p6:trace-2closedv6+v1-v3-v4}
        v_6+v_1 - v_3 - v_4 & = 0.
    \end{align}
Substituting back into \eqref{eq:p6:trace-2closedv2v3-v5v6} gives
\begin{align}
    \nonumber
    0 
    & = v_1v_2 - v_4v_5 \\
    \nonumber
    & = v_1v_2 - v_4(v_1+v_2 - v_4) \\
    & = (v_1-v_4)(v_2-v_4).
\end{align}
This forces $v_1=v_4$ or $v_2=v_4$.
\setcounter{case}{0}
\begin{case}
\textbf{\boldmath $v_1=v_4$.} 
\end{case}

Together with \eqref{p6:trace-2closedv1+v2-v4-v5} and \eqref{p6:trace-2closedv6+v1-v3-v4}, this gives
\begin{align*}
    v_5 & = v_1+v_2-v_4 = v_2 \\
    v_6 & = v_3+v_4-v_1 = v_3,
\end{align*}
contrary to irreducibility of $v$.

\begin{case}
    \textbf{\boldmath $v_2=v_4$.}
\end{case}

Together with \eqref{p6:trace-2closedv1+v2-v4-v5}, we get $v_1=v_5$. Putting this together with $\Phi_v(E_k)=-\idty$, we arrive at
\begin{align*}
    0
    & = [\Phi_{v_1v_2v_3}(E_k) + [\Phi_{v_4v_5v_6}(E_k)]^{-1} ]_{12} \\
    & = \big( -(E_k -v_2)(E_k -v_3)+1 \big) - \big( -(E_k -v_5)(E_k -v_6)+1 \big) \\
    & = -v_2v_3+v_1(v_3+v_2 - v_1) \\
    & = -(v_1-v_2)(v_1-v_3).
\end{align*}
If $v_1=v_2$, then (recalling $v_5=v_1$ and $v_4=v_2$), \eqref{p6:trace-2closedv6+v1-v3-v4} implies $v_3=v_6$ and thus $v$ has the form $aabaab$, which is reducible. Similarly, if $v_1=v_3$, then $v$ has the form $ababab$, which is again reducible.\end{claimproof}
\medskip

Combining Claim~\ref{claim:p6dso1}, \ref{claim:p6dso2}, and \ref{claim:p6dso3}, we obtain $\frg_\DSO(6) \leq 2$. Furthermore, since we have already shown $\frg_\DSO(a,0,0,-a,0,0)=2$ for $a \neq 0$, we deduce $\frg_\DSO(6)=2$.
\bigskip

\ref{p6odjm}.    Note that $\frg_\ODJM(6) \leq 3$. Indeed, we know that $\frg_\ODJM(a) \leq 4$ for any irreducible $a \in \bbR_+^6$. If $\frg_\ODJM(a)=4$, this (by symmetry) means that all non-central gaps close, which forces $a$ to be reducible by Theorem~\ref{t:isotorusfacts}. Thus $\frg_\ODJM(6) \leq 3$ follows.

Let us assume that $a \in \bbR_+^6$ has three closed gaps. As usual, we can scale and assume that the closed gaps occur at $0$ and $\pm 1$. Let us consider the case in which the ``inner'' pair of non-central gaps close, which corresponds to $\Psi(\pm 1) = \idty$. Since the central gap at zero closes, we may deduce 
\begin{equation} \label{eq:p63gapbalance}
    a_1a_3a_5 = a_2a_4a_6.
    \end{equation}
Recalling 
\[\Psi_{a_1a_2a_3}(1) =
\begin{bmatrix}
        \frac{1-a_1^2-a_2^2}{a_1a_2a_3}
        & \frac{a^2_2-1}{a_1a_2a_3} \\
        \frac{a_3}{a_1a_2} - \frac{a_1a_3}{a_2}
        & -\frac{a_3}{a_1a_2}.
    \end{bmatrix}.
\]
Using this and substituting $a_6 = a_1 a_3 a_5 /(a_2a_4)$, we arrive at
\begin{align*}
    0
    & = [\Psi_{a_1a_2a_3}(1) - [\Psi_{a_4a_5a_6}(1)]^{-1}]_{11}
    -  [\Psi_{a_1a_2a_3}(1) -[\Psi_{a_4a_5a_6}(1)]^{-1}]_{21}\\
    & = \frac{1-a_1^2-a_2^2}{a_1a_2a_3}+ \frac{a_6}{a_4a_5}
    -\left( \frac{a_3}{a_1a_2} - \frac{a_1a_3}{a_2} + \left( \frac{a_6}{a_4a_5} - \frac{a_4a_6}{a_5} \right) \right) \\
    & = \frac{1-a_1^2-a_2^2}{a_1a_2a_3}
    - \frac{a_3}{a_1a_2} + \frac{a_1a_3}{a_2} + \frac{a_1a_3}{a_2} \\
    & = \frac{1-a_1^2 - a_2^2 - a_3^2 + 2a_1^2a_3^2}{a_1 a_2 a_3 }.
\end{align*}
Thus,
\[1-a_1^2-a_2^2 = (1-2a_1^2)a_3^2\]
leading to
\begin{equation} \label{eq:p6odjm:innerclosea3} a_3 = \sqrt{\frac{a_1^2 + a_2^2 -1}{2a_1^2 - 1}}.
\end{equation}
Substituting this back into $\Psi_{a_1a_2a_3}(1) - [\Psi_{a_4a_5 a_6}(1)]^{-1}=0$ and considering the $(1,1)$ and $(1,2)$ entries gives us 
\begin{align*}
    a_1^2 + a_4^2 - 2a_1^2a_4^2 &= 0 \\
    a_2^2 + a_5^2 - 2a_2^2a_5^2 &= 0,
\end{align*}
yielding
\begin{align}
\label{eq:p6odjm:innerclosea4}
    a_4 & = \frac{a_1}{\sqrt{2a_1^2-1}} \\
    \label{eq:p6odjm:innerclosea5}
    a_5 & = \frac{a_2}{\sqrt{2a_2^2-1}}.
\end{align}
Substituting \eqref{eq:p6odjm:innerclosea3}, \eqref{eq:p6odjm:innerclosea4}, and \eqref{eq:p6odjm:innerclosea5} into \eqref{eq:p63gapbalance} gives
\begin{align*}
    a_6
    = \frac{a_1a_3a_5}{a_2a_4}
    = \ \sqrt{\frac{a_1^2 + a_2^2 - 1}{2a_2^2 - 1}}.
\end{align*}
Thus, if $a$ has at least three closed gaps and the pair comes from the inner gaps, $a$ has the form
\begin{equation}\label{eq:p6odjm:3closedaform}
    a = \left(\alpha,\beta, \sqrt{\frac{\alpha^2 + \beta^2-1}{2\alpha^2-1}},
\frac{\alpha}{\sqrt{2\alpha^2-1}},
\frac{\beta}{\sqrt{2\beta^2-1}},
\sqrt{\frac{\alpha^2 + \beta^2-1}{2\beta^2-1}}
\right)
\end{equation}
where $\alpha,\beta>1/\sqrt{2}$, up to a constant multiple. The reader can verify by direct computations that any $a \in \bbR_+^6$ of the form \eqref{eq:p6odjm:3closedaform} with $\alpha,\beta>1/\sqrt{2}$ has closed gaps at $E = 0, \pm 1$. For generic choices of $\alpha$ and $\beta$, the resulting $a$ is irreducible, so we see that $\frg_\ODJM(6) \geq 3$, concluding the argument.
\end{proof}

\section{Bounds for Larger Periods} \label{sec:gen}
\begin{proof}[Proof of Theorem~\ref{t:pgeq4}]
\ref{pgeq4dso} One may use the identities
\begin{equation} \label{eq:Mpowers}
    [M(0)]^2 = [M(1)]^3 = -\idty, \quad [M(-1)]^3 = \idty
\end{equation}
to conclude. Indeed, if $p\geq 7$ is odd, then
    \[ v = (1,1,1,\underbrace{0,0,\ldots,0}_{p-3 \text{ copies}}) \]
    produces a closed gap at $E = 0$ on account of \eqref{eq:Mpowers}. Similarly, if $p \geq 7$ is even, then
    $$v = (1,1,1,-1,-1,-1,\underbrace{0,0,\ldots,0}_{p-6 \text{ copies}})$$
    produces a closed gap at zero.
    \bigskip
    
    \ref{pgeq4dsoub} Assume $v \in \bbR^p$ is irreducible. By an inductive calculation, one can check directly that for any $w \in \bbR^m$,
    \begin{equation} \label{eq:Phi21asymptotics}
    [\Phi_w(E)]_{21} = E^{m-1} - \left( \sum_{j=1}^{m-1} w_j \right) E^{m-2} + O(E^{m-3}).
    \end{equation}

        \setcounter{case}{0}
    \begin{case}
        \textbf{\boldmath $p$ is divisible by $4$.}
    \end{case}
Consider the anti-periodic closed gaps of $v$, which are defined by $\Phi_v(E)=-\idty$, which we may rewrite as
\[ \Phi_{v^+}(E) + [\Phi_{v^-}(E)]^{-1} = 0, \]
    where $v^+ = (v_1,\ldots, v_{\frac{p}{2}})$ and $v^- = (v_{\frac{p}{2}+1} , \ldots, v_p)$.
    On account of \eqref{eq:Phi21asymptotics}, we get
    \begin{equation} \label{eq:Phi21clgapasymptotics}
    0 = [\Phi_{v^+}(E) + [\Phi_{v^-}(E)]^{-1}]_{21} = \left( \sum_{j=1}^{\frac{p}{2}-1} (v_{j + \frac{p}{2}} - v_j ) \right) E^{\frac{p}{2}-2} + O(E^{\frac{p}{2}-3})
    \end{equation}
    for any antiperiodic closed gap $E$; as in previous arguments, this holds for all cyclic permutations of $v$. We claim that there are at most $\frac{p}{2}-2$ antiperiodic closed gaps. Indeed, if there are $\frac{p}{2}-1$ or more antiperiodic closed gaps, then we have by \eqref{eq:Phi21clgapasymptotics} and its cyclic permutations:
\begin{equation} \label{eq:vj+p2-vj=0}
    \sum_{j=1}^{\frac{p}{2}-1} (v_{j+\ell + \frac{p}{2}} - v_{j + \ell} ) = 0, \quad \forall 0 \le \ell \le p-1.
\end{equation}
Denote $\gamma_0=1$, $\gamma_1=0$, $\gamma_\ell = (-1)^{\ell+1}$ for $\ell \geq 2$. Multiplying the $\ell$th equation by $\gamma_\ell$ and summing the results from $\ell = 0$ to $\ell = \frac{p}{2}-1$, we get\footnote{Notice that this uses the assumption that $4$ divides $p$.}
\[ v_1 = v_{\frac{p}{2}+1}. \]
Cyclically permuting, one sees $v_j = v_{j+ \frac{p}{2} }$ for any $j$, contrary to irreducibility of $v$.

Thus, there are at most $\frac{p}{2}-2$ antiperiodic closed gaps. Since there are at most $\frac{p}{2}-1$ periodic closed gaps, it follows that the total number of closed gaps is at most $p-3$ for irreducible $v$.
    
    \begin{case}
        \textbf{\boldmath $p$ is odd.} 
    \end{case}
    In this case, put $m = (p+1)/2$, $v^+ = (v_1,\ldots,v_m)$, and $v^- = (v_{m+1},\ldots,v_p)$. Note that $m-1$ is relatively prime to $p$, which can be seen (for instance) by observing
    \[ p - 2(m-1)=1.\]
    Note that \eqref{eq:Phi21asymptotics} gives
    \[ [\Phi_{v^+}(E) - [\Phi_{v^-}(E)]^{-1}]_{21}
    = E^{m-1} + \left(1 - \sum_{j=1}^{m-1} v_j\right)E^{m-2 } + O(E^{m-3}).\]
    We claim that $v$ has at most $m-2$ closed periodic gaps. Indeed, if $v$ has $m-1$ or more closed periodic gaps, the calculation above (and cyclic permutations as usual) implies
    \[ \sum_{j=1}^{m-1} v_j  = \sum_{j=1}^{m-1} v_{j+1},\]
    leading to $v_1 = v_m$. Cyclically permuting again, we get $v_j = v_{j+m-1}$ for any $j$. Since $m-1$ is relatively prime to $p$, $v$ is constant, a contradiction. 

    In a completely similar fashion, one sees that $v$ has no more than $m-2$ closed gaps satisfying $\Phi = -\idty$. Thus, $v$ has at most
    \[ 2(m-2)=p-3 \]
    gaps, as promised.
    \bigskip

    \ref{pgeq4odjm} If $p \geq 7$ is even, choose any irreducible $a \in \bbR_+^p$ such that
    \begin{equation}
        a_1a_3 \cdots a_{p-1} = a_2a_4 \cdots a_p.
    \end{equation}
    By an inductive calculation, one can prove that
    \begin{equation}
        \Psi_a(0) = (-1)^{p/2}\idty,
    \end{equation}
    so $\frg(p) \geq 1$ for every even $p \geq 4$.

    Next, we consider odd $p \geq 4$. Note that
    \begin{equation}
        B(1,1)^3=  B(\tfrac{1}{\sqrt{2}},1)^4 = -\idty.
    \end{equation}
    Thus, if $ p \geq 7$ is congruent to $3$ modulo $4$, then
    \begin{equation}
        a = (1,1,1,\underbrace{\tfrac{1}{\sqrt{2}},\ldots,\tfrac{1}{\sqrt{2}}}_{(p-3) \text{ copies}})
    \end{equation}
    is an irreducible vector enjoying closed gaps at $E = \pm 1$.

Similarly, if $p \geq 7$ is congruent to $1$ modulo $4$, the vector
\begin{equation}
        a = \Big(2,3,2,\sqrt{3/2},\sqrt{3/2},\underbrace{\tfrac{1}{\sqrt{2}},\ldots,\tfrac{1}{\sqrt{2}}}_{(p-5) \text{ copies}} \Big)
    \end{equation}
    exhibits closed gaps at $E = \pm 1$. 
    
\ref{pgeq4odjmub} Assume $p \geq 3$ is given. We note that the number of closed gaps of $a \in \bbR_+^p$  is at most $p-2$ by Theorem~\ref{t:gjac}. If $p$ is odd, then the number of closed gaps of any $a \in \bbR_+^p$ is \emph{even} by Lemma~\ref{lem:odjmReflect}. Since $p-2$ is odd when $p$ is odd, $\frg_\ODJM(p) \leq p-3$ for odd $p\geq 3$.\footnote{Note this gives an abstract proof of part of Theorem~\ref{t:123}.} If $p \geq 3$ is even, the only way for $p-2$ gaps to close would be for all gaps but the central gap to close. However, this would in turn imply that the spectrum of $\boL_a$ has two components, each of equilibrium measure $1/2$, which in turn would imply that $\boL_a$ has period $2$ by Theorem~\ref{t:isotorusfacts}. Since this is a contradiction for $p \geq 4$, we again have $\frg_\ODJM(p) \leq p-3$ in this case.
\end{proof}

\begin{proof}[Proof of Theorem~\ref{t:multiples}]
    Let $p,k \geq 2$ be given. Choose an irreducible $v \in \bbR^p$ maximizing $\frg_\DSO(p)$, that is, such that
    \[\frg:= \frg_\DSO(v) = \frg_\DSO(p), \]
    and denote the energies corresponding to closed gaps by $E_1,E_2,\ldots,E_\frg$. Let $v^\shifted := \cyclic(v)$ denote the cyclic permutation of $v$. As observed above, we have
    \begin{equation} \label{eq:multiplies:basis}
    \Phi_v(E_j) = \Phi_{v^\shifted}(E_j) \in \{\pm \idty\}, \quad \forall \ 1 \le j \le \frg.\end{equation}
Defining $w\in \bbR^{2kp}$ by 
\[w = \underbrace{vv\cdots v}_{k \text{ copies}} \underbrace{v^\shifted v^\shifted \cdots v^\shifted}_{k \text{ copies}},\]
we claim that $\frg_\DSO(w) \geq \frg+(k-1)p$.
Indeed, \eqref{eq:multiplies:basis} already shows
\begin{equation}
    \Phi_w(E_j) = \idty \ \forall \ 1 \le j \le \frg.
\end{equation}
Moreover, for any $E$ such that 
\begin{equation} \label{eq:trace:chebypoint} \tr(\Phi_v(E)) = 2\cos(\pi j /k)
\end{equation}
for an integer $1 \le j \le k-1$, $\Phi_v(E)$ has linearly independent eigenvectors corresponding to eigenvalues $\exp(i \pi j /k)$ and thus $\Phi_v(E)^k= (-1)^j \idty$. By cyclicity of the trace, one gets the same result for $\Phi_{v^\shifted}(E)$, so one arrives at
\[ \Phi(w,E) = \Phi_{v^\shifted}(E)^k\Phi_v(E)^k = \idty \]
for any $E$ satisfying \eqref{eq:trace:chebypoint}. Since $w$ is irreducible by construction, we arrive at $\frg_\DSO(2kp) \geq \frg_\DSO(w) \geq \frg+(k-1)p$

The proof in the case $\bullet = \ODJM$ is identical up to a change in notation.
\end{proof}

\begin{appendix}
    \section{Isospectral Tori}\label{sec:isotori}
    Here we recall a few important notions from the inverse spectral theory of periodic Jacobi matrices. A Jacobi matrix $\boJ$ is said to be \emph{reflectionless} on the set $\Sigma \subseteq \bbR$ if the diagonal elements of its Green function have purely imaginary boundary values Lebesgue almost everywhere on $\Sigma$, that is, for every $n \in \bbZ$,
    \begin{equation}
        \lim_{\varepsilon \downarrow 0} \Re \, \langle \delta_n, (\boJ - E)^{-1}\delta_n \rangle = 0, \ \text{a.e.\ } E \in \Sigma.
    \end{equation}
It is known that every periodic Jacobi matrix, $\boJ$, is reflectionless on its spectrum, $\spectrum(\boJ)$, and that the spectrum is a \emph{finite gap set}, that is, a union of finitely many nondegenerate closed intervals. 

The inverse theory begins with a finite gap set
\begin{equation} \label{eq:fingapdef}
 \Sigma = [\alpha_1,\beta_1] \cup \cdots \cup [\alpha_m,\beta_m]
\end{equation}
where 
\begin{equation} \label{eq:fingapdef2}
    \alpha_1<\beta_1 < \alpha_2 < \cdots < \beta_m
\end{equation} and asks what reflectionless Jacobi matrices may have that set as their spectrum. More precisely, the \emph{isospectral torus} $\isotorus(\Sigma)$ consists of the set of all bounded Jacobi matrices $\boJ$ such that 
\begin{itemize}
    \item $\spectrum(\boJ) = \Sigma$
    \item $J$ is \emph{reflectionless} on $\Sigma$.
\end{itemize}
That this set is non-empty for any finite-gap set $\Sigma$ (let alone a manifold or torus) is non-trivial. We direct the reader to \cite[Chapter~5]{Simon2011Szego} and \cite[Chapters~8-9]{Teschl2000Jacobi} for detailed discussions.

It turns out that there is a remarkable characterization of exactly which finite-gap sets have isospectral tori consisting of periodic Jacobi matrices. To describe this characterization, let $\rho = \rho_\Sigma$ denote the \emph{equilibrium measure} of $\Sigma$, that is, the unique Borel probability measure on $\Sigma$ that minimizes the energy functional
\begin{equation}
    \energy(\mu):= - \iint \log|x-y|\, d\mu(x)\, d\mu(y).
\end{equation}

\begin{theorem} \label{t:isotorusfacts}
    Let $\Sigma$ be a finite-gap set as in \eqref{eq:fingapdef}--\eqref{eq:fingapdef2} with equilibrium measure $\rho = \rho_\Sigma$. If $\rho$ assigns rational weight to every connected component of $\Sigma$, then every element of $\isotorus(\Sigma)$ is periodic. Moreover, if
    \[ \rho([\alpha_j,\beta_j]) = \frac{p_j}{q_j}, \quad j = 1,2,\ldots,m \]
    with $p_j/q_j$ in lowest terms, then every element of $\isotorus(\Sigma)$ is $q$-periodic, where $q = \lcm(q_1,\ldots,q_m)$. In particular, if $\Sigma$ consists of a single interval, then $\isotorus(\Sigma)$ consists of a single $\boJ$, which then has constant diagonal and off-diagonals.
\end{theorem}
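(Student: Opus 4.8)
The plan is to combine two imported facts --- the potential-theoretic description of finite-gap sets with rational harmonic measure, and the inverse Floquet theory of periodic Jacobi matrices --- via a soft dimension count, and then to read off the single-interval statement as the ``no gaps'' special case. After subtracting a constant from the coefficients we may assume $\int x\,d\rho_\Sigma(x) = 0$, which is harmless since translating $\Sigma$ translates every element of $\isotorus(\Sigma)$ by the same additive constant. The first ingredient is potential-theoretic: $\rho_\Sigma$ is also the harmonic measure of the unbounded component of $\bbC\setminus\Sigma$ evaluated at $\infty$, and the hypothesis that $\rho_\Sigma$ assign rational mass $p_j/q_j$ (in lowest terms) to each band $[\alpha_j,\beta_j]$ is equivalent to the existence of a real polynomial $\Delta$ with $\Delta^{-1}([-2,2]) = \Sigma$; one may take $\deg\Delta = q := \lcm(q_1,\dots,q_m)$, and with this minimal choice $\Delta$ is unique up to sign, with $\rho_\Sigma$ equal to $\frac{1}{q}$ times the pullback under $\Delta$ of the equilibrium measure of $[-2,2]$ (see, e.g., \cite[Chapter~5]{Simon2011Szego}). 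Fix such a $\Delta$.

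The second ingredient is that a polynomial $\Delta$ as above is exactly the discriminant of a family of $q$-periodic Jacobi matrices, and conversely; write $\mathcal T_\Delta$ for the set of all bounded two-sided $q$-periodic Jacobi matrices with discriminant $\Delta$. Each such $\boJ$ has $\spectrum(\boJ) = \Delta^{-1}([-2,2]) = \Sigma$ and is reflectionless on $\Sigma$ (periodic Jacobi matrices are reflectionless on their spectra, as recalled above), so $\mathcal T_\Delta \subseteq \isotorus(\Sigma)$. In particular $\isotorus(\Sigma)\neq\varnothing$, which also re-proves nonemptiness in this case. Moreover, by the inverse spectral theory of periodic Jacobi matrices --- through the Dirichlet-data, equivalently hyperelliptic-divisor, parametrization --- $\mathcal T_\Delta$ is a nonempty compact connected manifold whose dimension equals the number of gaps of $\Sigma$, exactly like $\isotorus(\Sigma)$ itself (see \cite[Chapters~8--9]{Teschl2000Jacobi}).

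Now I would glue these together. We have $\mathcal T_\Delta \subseteq \isotorus(\Sigma)$ with both spaces compact connected manifolds of the same dimension; compactness makes $\mathcal T_\Delta$ closed in $\isotorus(\Sigma)$, while by invariance of domain a submanifold of the same dimension as its ambient manifold is automatically open, so $\mathcal T_\Delta$ is clopen and nonempty. Connectedness of $\isotorus(\Sigma)$ (it is a torus, as recalled above) then forces $\mathcal T_\Delta = \isotorus(\Sigma)$, i.e., every element of $\isotorus(\Sigma)$ is $q$-periodic --- both the first and the second assertion of the theorem. Finally, if $\Sigma = [\alpha_1,\beta_1]$ is a single interval then $m = 1$, $\rho_\Sigma$ is the arcsine distribution so the single band has mass $1$ and $q = q_1 = 1$; hence every element of $\isotorus([\alpha_1,\beta_1])$ has constant diagonal and off-diagonal, and since there is exactly one constant Jacobi matrix with spectrum $[\alpha_1,\beta_1]$, the torus degenerates to that single point, recovering the Borg--Hochstadt rigidity.

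The two soft gluing moves (compactness and invariance of domain) are routine; the genuine content, and the main obstacle to a self-contained proof, is packaged in the two imported inputs: (i) that rationality of the harmonic measures is equivalent to the existence of the degree-$q$ polynomial $\Delta$ (a statement about polynomial preimages of $[-2,2]$), and (ii) that $\isotorus(\Sigma)$ and $\mathcal T_\Delta$ are manifolds of dimension equal to the number of gaps of $\Sigma$, with $\mathcal T_\Delta$ nonempty (the heart of the inverse spectral theory; alternatively one can obtain the identity $\isotorus(\Sigma) = \{\boJ : \Delta(\boJ) = S^{q} + S^{-q}\}$, with $S$ the shift, from the Damanik--Killip--Simon magic formula and run the argument from there). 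I would cite (i) and (ii) rather than reprove them here.
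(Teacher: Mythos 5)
The paper never proves Theorem~\ref{t:isotorusfacts}: it is quoted as background from the inverse theory, with a pointer to \cite{Simon2011Szego} (notably Theorem~5.13.8 and Corollary~5.13.9) and \cite{Teschl2000Jacobi}, so your proposal is not competing with an in-paper argument but supplying a proof sketch the authors chose to cite away. Your route is a legitimate one: import (i) the equivalence of rational harmonic measures with $\Sigma=\Delta^{-1}([-2,2])$ for a degree-$q$ polynomial, and (ii) the inverse Floquet theory giving that the set $\mathcal{T}_\Delta$ of $q$-periodic Jacobi matrices with discriminant $\Delta$ is a nonempty compact torus contained in $\isotorus(\Sigma)$ and of the same dimension as $\isotorus(\Sigma)$; then compactness, invariance of domain, and connectedness of $\isotorus(\Sigma)$ force $\mathcal{T}_\Delta=\isotorus(\Sigma)$, and the single-interval case is the $q=1$ degeneration. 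The gluing itself is sound. Three points would need care in a written version: (a) for $\mathcal{T}_\Delta\neq\varnothing$ you must take $\Delta$ to be an honest Jacobi discriminant (correct sign of the leading coefficient and $|\Delta|\ge 2$ at all critical points), which the polynomial from (i) is only after that normalization, so ``unique up to sign'' should be sharpened; (b) the dimension of $\mathcal{T}_\Delta$ is the number of \emph{open} gaps of $\Sigma$, namely $m-1$, not $q-1$, because Dirichlet data in closed gaps are frozen --- this is precisely the version of the inverse theory you must quote for the dimensions to match; and (c) the invariance-of-domain step needs both manifold structures to be compatible with one common (coefficientwise) topology, which the cited parametrizations do provide but which should be said. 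Finally, be aware that the standard proofs of the torus structure of $\isotorus(\Sigma)$ already yield coefficients quasi-periodic with frequency vector given by the harmonic measures, from which $q$-periodicity in the rational case is immediate; so your argument is an honest repackaging rather than a genuinely more elementary proof --- the weight still rests on the imported inputs, exactly as in the paper's citation.
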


If $\boJ$ is periodic of period $q$, then its spectrum is a finite-gap set whose connected components have rational equilibrium measure, and the measure of each connected component is a multiple of $1/q$. we direct the reader to \cite{Simon2011Szego}, particularly Theorem~5.13.8 and Corollary~5.13.9.
    \begin{theorem} \label{t:gjac}
    For any $p \geq 2$, $\frg_\JAC(p)=p-2$.
    \end{theorem}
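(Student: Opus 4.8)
The plan is to prove the two inequalities $\frg_{\JAC}(p) \le p-2$ and $\frg_{\JAC}(p) \ge p-2$ separately.

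For the upper bound I would invoke the Borg--Hochstadt theorem: if an irreducible $(a,v) \in \bbR_+^p \times \bbR^p$ had all $p-1$ of its gaps closed, then $A$ and $V$ would be constant, and hence $(a,v)$ would have minimal period $1$, contradicting irreducibility. Thus every irreducible $(a,v)$ has at most $p-2$ closed gaps. (Alternatively, this also follows from the band-counting below together with Theorem~\ref{t:isotorusfacts}: $p-1$ closed gaps would make the spectrum a single interval, forcing constant coefficients.)

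For the lower bound I would exhibit an irreducible $(a,v)$ whose spectrum is a union of exactly two bands; this is equivalent to having exactly $p-2$ closed gaps, because in the band decomposition $\spectrum(\boJ_{a,v}) = \bigcup_{j=1}^p[\lambda_j^-,\lambda_j^+]$ with nondegenerate bands, a union of two intervals means $\lambda_j^+ = \lambda_{j+1}^-$ for all but one index $j \in \{1,\dots,p-1\}$. To get such an operator, first I would produce a two-interval finite-gap set $\Sigma = [\alpha_1,\beta_1] \cup [\alpha_2,\beta_2]$ whose equilibrium measure $\rho$ satisfies $\rho([\alpha_1,\beta_1]) = 1/p$, and hence $\rho([\alpha_2,\beta_2]) = (p-1)/p$. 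The existence of such a set is classical: along a one-parameter family of two-interval sets (say $[-1,-s]\cup[t,1]$) the harmonic measure of the first interval varies continuously over $(0,1)$, so the intermediate value theorem yields the value $1/p$. Since $\gcd(p-1,p)=1$, both weights are already in lowest terms with denominator exactly $p$, so by Theorem~\ref{t:isotorusfacts} together with the (classical) non-emptiness of isospectral tori of finite-gap sets (see \cite{Simon2011Szego, Teschl2000Jacobi}) there is a Jacobi matrix $\boJ \in \isotorus(\Sigma)$, and every such $\boJ$ is $p$-periodic. I would then check that $\boJ$ is in fact irreducible: if it had minimal period $p' \mid p$ with $p' < p$, then each spectral band of $\boJ$ would have equilibrium measure equal to a positive multiple of $1/p'$, which is impossible since $\rho([\alpha_1,\beta_1]) = 1/p < 1/p'$. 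Hence the coefficient vector of $\boJ$ is an irreducible element of $\bbR_+^p \times \bbR^p$ with two-interval spectrum, i.e.\ with precisely $p-2$ closed gaps, so $\frg_{\JAC}(p) \ge p-2$.

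The main obstacle is purely the bookkeeping in the lower bound: one must arrange the rational harmonic-measure data so that the least common multiple of the denominators is exactly $p$ (handled by the choice $1/p$, $(p-1)/p$ and $\gcd(p-1,p) = 1$) and then rule out that all matrices in $\isotorus(\Sigma)$ have a proper subperiod, which is exactly where the non-divisibility of $1/p$ by $1/p'$ for $p' < p$ is used. Everything else is either standard inverse spectral theory that can be cited or an elementary continuity/interlacing argument.
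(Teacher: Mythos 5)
Your proposal is correct and follows essentially the same route as the paper's proof: Borg--Hochstadt (equivalently Theorem~\ref{t:isotorusfacts}) for the upper bound, and a two-interval set with equilibrium weights $1/p$ and $(p-1)/p$ together with Theorem~\ref{t:isotorusfacts} for the lower bound. The only difference is that you spell out two steps the paper leaves implicit---the intermediate-value argument producing a two-interval set with weight exactly $1/p$, and the check that no element of $\isotorus(\Sigma)$ can have a proper subperiod $p'<p$---both of which are correct.
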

    \begin{proof}
    Let $p \geq 2$ be given.
    
    Let us first show that $\frg_\JAC(p) \leq p-2$. Indeed, if $\frg(a,v) = p-1$ for some $(a,v) \in \bbR_+^p \times \bbR^p$, then $\spectrum(\boJ_{a,v})$ is an interval, which implies that $a$ and $v$ are constant vectors by the Borg--Hochstadt theorem (also by Theorem~\ref{t:isotorusfacts}), and hence not irreducible.

    To show that $\frg_\JAC(p) \geq p-2$, choose a set $\Sigma \subseteq \bbR$ with two connected components, $I_1$ and $I_2$ such that the equilibrium measure of $I_1$ is $1/p$ (and consequently the equilibrium measure of $I_2$ is necessarily $(p-1)/p$. By Theorem~\ref{t:isotorusfacts}, every element of the isospectral torus of $\Sigma$ is a $p$-periodic Jacobi matrix. Necessarily, then, any element of this isospectral torus yields $p-1-1=p-2$ closed gaps, as desired.
    \end{proof}

    \section{Complex Jacobi Matrices} \label{sec:cx}
    Let us briefly discuss the reason that we restrict the discussion to strictly positive \emph{off-diagonals} in the present work. Indeed, given $a \in \bbC^p$, $v \in \bbR^p$, we may define the associated Jacobi operator by 
\begin{equation}
    [\boJ\psi](n)= A(n-1)^* \psi(n-1)+ V(n)\psi(n) + A(n)\psi(n+1),
\end{equation}
where $*$ denotes the complex conjugate and $A$ and $V$ are the $p$-periodic extentions of $a$ and $v$ as before. 

First, we insist that $a_n \neq 0$ for every $n$, since, if $a_n = 0$ for some $n$, then the ``bands'' of $\spectrum(\boJ_a)$ degenerate to single points, so the relevant questions become trivial or meaningless.

In this case, the transfer matrices take the form
\[
B(t,s) = \frac{1}{s} \begin{bmatrix}
    t & -1 \\ |s|^2 & 0
\end{bmatrix}\]
and solve
\begin{equation}
    \begin{bmatrix}
        u(n+1) \\ A(n)^*u(n)
    \end{bmatrix}
    = B(z - V(n),A(n))\begin{bmatrix}
        u(n) \\ A(n-1)^*u(n-1)
    \end{bmatrix}
\end{equation}
whenever $Ju=zu$.

To see why we also choose to avoid complexifying $a$, let us consider $a \in \bbC^p$ given by 
\begin{equation}
\omega:=\exp(2\pi i /p), \
    a_n = \omega^n, \quad n=1,2,\ldots,p.
    \end{equation}
The reader can check by direct calculations that this $a$ has closed gaps at
\[ E_j = 2\cos(\pi j /p), \quad 1\le j \le p-1, \]
and thus $\frg_\ODJM(a) = p-1$; in particular, $a$ is irreducible in the sense we discussed elsewhere in the paper, but $\boL_a$ has more closed gaps than the upper bound in \eqref{eq:gJAC} allows. The phenomenon responsible for this is the following: $\boL_a$ is unitarily equivalent to the free Laplacian $\boL_\ones$ via the intertwining operator $[\Lambda\psi](n) = \exp(2\pi i n/p)\psi(n)$. Thus, irreducibility of $a \in (\bbC^*)^p$ should be interpreted as irreducibility \emph{modulo holonomy}. Restricting to $a_n \in \bbR_+$ avoids this technicality altogether.
\end{appendix}

\bibliographystyle{abbrvArxiv}

\bibliography{REU2023bib}

\end{document}